\documentclass[a4paper,10pt]{article}
\usepackage[centertags]{amsmath}
\usepackage{amsfonts}
\usepackage{amssymb}
\usepackage{amsthm}
\usepackage{dsfont}
\usepackage{mathrsfs}
\usepackage{wasysym}
\usepackage{esint}
\usepackage[usenames]{color}
\usepackage{url}
\usepackage
{hyperref}
\addtolength{\topmargin}{-2cm} \addtolength{\oddsidemargin}{-1cm}
\addtolength{\textheight}{2cm} \addtolength{\textwidth}{2cm}
\setlength{\parskip}{0.3cm} \setlength{\parindent}{0cm}


\theoremstyle{plain}
\newtheorem{lemma}{Lemma}[section]
\newtheorem{theorem}[lemma]{Theorem}

\newtheorem{cor}[lemma]{Corollary}

\theoremstyle{definition}

\newtheorem{remark}[lemma]{Remark}

\def\barroman#1{\sbox0{#1}\dimen0=\dimexpr\wd0+1pt\relax
  \makebox[\dimen0]{\rlap{\vrule width\dimen0 height 0.06ex depth 0.06ex}%
    \rlap{\vrule width\dimen0 height\dimexpr\ht0+0.03ex\relax 
            depth\dimexpr-\ht0+0.09ex\relax}%
    \kern.5pt#1\kern.5pt}}

\newcommand{\twopartdef}[4]
{
	\left\{
		\begin{array}{ll}
			#1 & \mbox{if  } #2 \bigskip \\
			#3 & \mbox{if  } #4
		\end{array}
	\right.
}

\newcommand{\threepartdef}[6]
{
	\left\{
		\begin{array}{lll}
			#1 & \mbox{if  } #2\bigskip \\
			#3 & \mbox{if  } #4 \bigskip \\
			#5 & \mbox{if  } #6
		\end{array}
	\right.
}

\newcommand{\pl}[2]{{\frac{\partial #1}{\partial #2}}}
\newcommand{\ppl}[3]{{\frac{\partial^2 #1}{\partial #2 \partial #3}}}

\newcommand{\de}{\partial}
\newcommand{\R}{\mathbb{R}}

\newcommand{\N}{\mathbb{N}}
\newcommand{\h}{\mathcal{H}}

\newcommand{\emb}{\hookrightarrow}
\newcommand{\cemb}{\subset \subset}
\newcommand{\In}{\subset}
\newcommand{\Om}{\Omega}

\newcommand{\dl}{{\delta}}
\newcommand{\Dl}{{\Delta}}

\newcommand{\al}{{\alpha}}

\newcommand{\ed}{{\rm d}}
\newcommand{\id}{\,\,\ed}

\newcommand{\D}{{\nabla}}

\newcommand{\ti}[1]{{\tilde{#1}}}
\newcommand{\eps}{{\varepsilon}}
\newcommand{\fr}[2]{\frac{#1}{#2}}
\newcommand{\sm}{{\setminus}}

\newcommand{\vlinesub}[1]{\vline_{_{_{_{_{_{_{_{#1}}}}}}}}}
\newcommand{\la}{\langle}
\newcommand{\ra}{\rangle}

\def\XXint#1#2#3{{\setbox0=\hbox{$#1{#2#3}{\int}$}
     \vcenter{\hbox{$#2#3$}}\kern-.5\wd0}}

\title{Compactness of minimal hypersurfaces with bounded index}
\date{\today}
\author{Ben Sharp \footnote{{\sc Department of Mathematics, South Kensington Campus, Imperial College London, LONDON, SW7 2AZ, UK, ben.g.sharp@gmail.com}}}
\begin{document}
\maketitle
\begin{abstract}
We prove a compactness result for minimal hypersurfaces with bounded index and volume, which can be thought of as an extension of the compactness theorem of Choi-Schoen \cite{CSc85} to higher dimensions.
\end{abstract}

\section{Introduction}
Compactness theorems for minimal hypersurfaces are integral to proving existence results and allow for a deeper understanding of the space of minimal hypersurfaces admitted by a closed manifold. Due to the recent work of Marques-Neves \cite{MN13}  minimal hypersurfaces exist in abundance at least in the case of positive Ricci curvature for the ambient metric. Moreover, since these minimal surfaces are smooth and have bounded volume, we have some control on their index due to results of Ejiri-Micallef \cite[Theorem 4.3]{EM08} (when $n=2$) and Cheng-Tysk \cite[Theorem 3]{CT94} ($n\geq 3$)\footnote{We remark that the index bounds of Ejiri-Micallef when $n=2$ require a more subtle  treatment than the higher dimensional case of Cheng-Tysk which follows by an adapted argument of Li-Yau on estimating the index of operators in Euclidean space \cite{LY83}}. Thus it makes sense to study their compactness theory. 

In general, smooth minimal submanifolds are analytically well controlled if they have bounded volume, and point-wise bounded second fundamental form $A$ - at which point we know that such manifolds are uniformly graphical about each of their points, and that these graphs are analytically well controlled. Therefore given any sequence of minimal submanifolds with a uniform bound on volume and second fundamental form, a smooth compactness theorem easily follows. In the case of minimal surfaces $M^2\emb N^3$ the work of Choi-Schoen \cite{CSc85} gives us something stronger: if $N$ satisfies $Ric_N \geq \al >0$ then the space of closed, embedded minimal hypersurfaces with bounded genus $\gamma$ is strongly compact in the smooth topology  - there is some smooth minimal hypersurface for which a subsequence converges locally graphically to this limit. In other words, a bound on the genus gives rise to a point-wise bound on the second fundamental form, and global control on area. 

In higher dimensions $n\geq 3$ and $M^n\emb N^{n+1}$, again with $Ric_N \geq \al >0$, control on topology of $M^n$ can never give a strong compactness theory due to counter-examples for the spherical Bernstein problem of Wu-Yi Hsiang \cite{H83}. In particular, when $N=S^4$ with the round metric, there exists a sequence of smooth embedded $\{M_k^3\}\In S^4$ such that; $\h^3(M_k) \leq \Lambda <\infty$, $M_k^3\cong S^3$ (but are not great spheres in $S^4$) and $M_k$ converge (as varifolds) to a singular $M^3\In S^4$ - it has two singularities at antipodal points of $S^4$ and $M^3$ is topologically $\fr{T^2\times [0,1]}{\sim}$ where the equivalence $\sim$ pinches the top and bottom tori $T^1\times \{1\}, T^2\times \{0\}$ to points. Therefore any generalisation of Choi-Schoen to higher dimensions would involve control on some other quantities.  

If we go back to $n=2$ we remark that a genus bound gives a bound on area due to a result of Choi-Wang \cite{CW83} (when $M$ is two-sided) and Choi-Schoen \cite{CSc85} (for general $M$): if an embedded minimal surface $M^2$ in $N^3$ has Euler characteristic $\chi(M)$, letting $\pi_1(N)$ be the fundamental group of $N$ and again $Ric_N \geq \al >0$ then 
$$\h^2(M)\leq \fr{16\pi}{\al}\left( \fr{2}{|\pi_1(N)|} - \fr12 \chi(M)\right).$$ Furthermore, a bound on genus and area gives a bound on index due to results of Ejiri-Micallef \cite[Theorem 4.3]{EM08}: 
$$index(M)\leq C(N)(\h^2(M) + \gamma -1) \leq C(N)\fr{16\pi}{\al}\left( \fr{1}{|\pi_1(N)|} - \chi(M)\right)$$
where the first inequality holds without control on the codimension or a lower bound on $Ric_N$ and $C(N)$ depends linearly on the dimension, and on the second fundamental form of some isometric embedding of $N$ into a Euclidean space. Here $index(M)$ is the Morse index - the number of negative eigenvalues associated with the Jacobi (second variation) operator for minimal hypersurfaces $M\In N$: 
\begin{equation}
Q(v,v):=\int_{M} |\D^\bot v|^2 - |A|^2|v|^2 -Ric_N(v,v) \id V_M,
\end{equation} 
where $v\in\Gamma({\rm{N}}M)$ is a section of the normal bundle and $\D^\bot$ is the normal connection.  
Thus when $n=2$ we clearly have 
\begin{equation}\label{dis}
index(M) + \h^2(M) \leq C(N)\fr{16\pi}{\al}\left( \fr{1}{|\pi_1(N)|} -  \chi(M)\right).
\end{equation}
We remark that if $M$ is two-sided then we can consider $Q$ to be defined on smooth functions over $M$ since every such $v=f\nu$ for some choice of unit normal $\nu$ and $f\in C^{\infty}(M)$. We obtain 
\begin{equation}\label{secvar}
Q(v,v)=Q(f,f):=\int_{M} |\D f|^2 - (|A|^2 +Ric_N(\nu,\nu))f^2 \id V_M.
\end{equation} 
Thus, in all dimensions, if $Ric_N\geq \al >0$ then $Q(f,f)<0$ whenever $f\equiv 1$. Therefore there are no stable (index zero) two-sided minimal hypersurfaces in such $N$. By considering a totally geodesic $\R \mathbb{P}^2\In \R \mathbb{P}^3$ we can see that there exist one-sided and stable minimal hypersurfaces in manifolds of positive Ricci curvature. 

It now seems reasonable to question whether bounded volume and index is sufficient for a compactness theorem in all dimensions?

In \cite{ScS81} Schoen-Simon prove a regularity and compactness theorem for orientable hypersurfaces with bounded volume which are stationary (minimal) and stable (index zero) in arbitrary closed $N$. As an application of such a result they extend the work of Almgren \cite{A68} and Pitts \cite{P81} to prove existence and regularity of minimal hypersurfaces in closed manifolds.    

The theorems in \cite{ScS81} are proved under the assumption that $M\in IV_n(N^{n+1})$ (the space of integrable varifolds) with $\h^{n-2}(sing(M)) = 0$\footnote{in general, due to Allard's regularity Theorem (see e.g. \cite{S83} or \cite{A72}), any stationary integral varifold $M$ is smooth in an open and dense subset, denoted $reg(M)$, and we let $sing(M) = spt(M)\sm reg(M)$ denote the singular set}. The results of Schoen-Simon follow from an in-depth local analysis of stable minimal hypersurfaces, with all estimates proven about points in $M$ under normal coordinates for $N$. The assumption on the singular set allows for the use of suitable test functions in the second variation formula \eqref{secvar} - \emph{a-priori} one can only test $Q$ with functions supported away from the singular set but this can be relaxed under the restriction $\h^{n-2}(sing(M)) = 0$. The main technical result \cite[Theorem 1]{ScS81} being that if a stable hypersurface is sufficiently Hausdorff close to a plane in normal coordinates, then the surface must decompose (on a possibly smaller region) into a multi-valued smooth graph over this plane and is therefore a smooth minimal hypersurface here. The regularity and compactness theorems \cite[Theorems 2 and 3]{ScS81} then follow by a suitably adapted dimension-reduction argument of Federer coupled with the results of Simons \cite{S68} that stable cones $C$ in $\R^{n+1}$ are hyperplanes when $n\leq 6$, can have an isolated singularity when $n=7$, and $\h^{n-7+\beta}(sing(C)) = 0$ for all $\beta >0$ when $n\geq 8$. 

Here we will use the local results of Schoen-Simon in order to prove a compactness theorem for minimal hypersurfaces with bounded volume and index when $2\leq n\leq 6$. The rough idea being to use a notion of almost minimising (or almost stable) due to Pitts \cite{P81}; if we have a sequence of say, index one smooth hypersurfaces $\{M_k\}$ with bounded volume, then we know they varifold converge to some stationary limit $M$ by Allard's compactness theorem \cite{A72}. Now, pick a point in $M$ and let $B_{\eps}(x)$ be some small ambient ball. If $B_{\eps}(x)\cap M_k$ is stable for all $k$ (up to subsequence), then $M\cap B_\eps(x)$ would be almost minimising in the sense of Pitts and we have the strong regularity results of Schoen-Simon to apply here. Moreover if  $B_{\eps}(x)\cap M_k$ is unstable for all $k$ sufficiently large, there cannot be a second ball in $M_k\sm B_\eps(x)$ which is unstable - otherwise the index of the approaching $M_k$ would eventually be two c.f. Lemma \ref{sball}.

We prove moreover that the convergence must be graphical over most of $M$ - away a finite set of points - and if the number of leaves in the convergence is eventually equal to one, the convergence is smooth and graphical everywhere, with the limit having the same index bound as the approaching hypersurfaces. Furthermore if the number of leaves in the convergence is bigger than one then the limit must be stable if it is two-sided. An easy corollary of this result is that if the ambient manifold has strictly positive Ricci curvature $Ric_N \geq \al >0$, then the space of finite volume and index minimal hypersurfaces is strongly compact in the smooth topology. 
In view of \eqref{dis} we therefore have recovered the result of Choi-Schoen \cite{CSc85} when $n=2$ and extended this appropriately for higher dimensions. 

\paragraph{Acknowledgements} 
I would like to thank Andr\'e Neves for the inspiration for this work and many useful discussions. I was supported throughout by Professor Neves' European Research Council StG agreement number P34897.

\section{Statement of results}

Central to our argument is the following result of Schoen-Simon \cite[Corollary 1]{ScS81}. Here, we need only assume that $N^{n+1}$ is a complete $C^3$ Riemannian manifold. Following \cite[p. 784]{ScS81} (with slightly different notation), let $p$ be a fixed point in $N$, $\sigma$ the injectivity radius at $p$ and $\rho_0\in (0,\sigma)$.  $B_{\rho}^N(p)$ is the open geodesic ball of radius $\rho\leq \rho_0$ and centre $p$. Finally $\mu_1$ is a constant such that 
$$\sup_{B^{n+1}_{\rho_0}(0)}\left| \pl{g_{ij}}{x^k} \right| \leq \mu_1 ,\,\,\,\,\,\,\,\,\,\sup_{B^{n+1}_{\rho_0}(0)}\left| \ppl{g_{ij}}{x^k}{x^l} \right| \leq \mu_1^2   $$
where $\{x^i\}$ are normal coordinates with respect to $B^N_{\rho_0}(p)$ and $g_{ij}$ are the metric components in these coordinates. In \cite{ScS81} they use the notation that $M=reg(M)$ and $\bar{M} = reg(M)\cup sing(M) = spt(M)$ when $M$ is viewed as an integral $n$-varifold, i.e. $M\in IV_n(N^{n+1})$. We will use the more standard notation $reg(M)$, $sing(M)$ and $spt(M)$ - see \cite{S83} for an introduction to varifolds.\begin{theorem}{\cite[Corollary 1]{ScS81}}\label{ssreg}
Suppose $M\in IV_n(N^{n+1})$ is stationary with $reg(M)$ embedded and orientable.  Let $p\in spt(M)$ with $\h^n(M\cap B^N_{\rho_0}(p))<\infty$, $\h^{n-2}(sing(M)\cap B^N_{\rho_0}(p)) = 0$, and suppose that $M$ is stable in $B^N_{\rho_0}(p)$ with respect to the area functional. Then 
$$\h^{\al}(sing(M)\cap B^N_{\fr{\rho_0}{2}}(p))  = 0,\,\,\,\,\al\geq 0,\,\,\,\,\al>n-7. $$
If $n\leq 6$ and $\rho_0^{-n}\h^n(M\cap B^N_{\rho_0}(p)) \leq \mu$ then 
$$\sup_{B^N_{\fr{\rho_0}{2}}(p)} |A|\leq \fr{C}{\rho_0}$$
for some $C=C(n,\mu,\mu_1\rho_0)<\infty.$ 
\end{theorem}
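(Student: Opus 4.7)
The plan is to combine Schoen-Simon's main technical $\eps$-regularity result -- \cite[Theorem 1]{ScS81}, stating that a stable stationary hypersurface sufficiently Hausdorff-close to an $n$-plane in a small geodesic ball decomposes as a smooth multi-valued graph -- with Simons' classification of stable stationary integral minimal cones in $\R^{m+1}$ (they are hyperplanes for $1\leq m\leq 6$, and satisfy $\h^{m-7+\beta}(sing(C))=0$ for every $\beta>0$ when $m\geq 7$), together with a Federer-style dimension-reduction argument. The first assertion of Theorem \ref{ssreg} is a measure bound on $sing(M)$; in the range $n\leq 6$ it forces $sing(M)\cap B^N_{\rho_0/2}(p)=\emptyset$, and then the curvature estimate becomes a standard scaling/compactness statement in the smooth category.

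For the singular-set bound I would argue by contradiction. Suppose $\h^\al(sing(M)\cap B^N_{\rho_0/2}(p))>0$ for some $\al>n-7$. Pick an $\h^\al$-density point $q\in sing(M)$ and blow up in normal coordinates at $q$ at scales $r_j\downarrow 0$: monotonicity of mass, the ambient volume bound, and Allard's compactness theorem produce a subsequential varifold limit $C\In\R^{n+1}$ which is a stationary, stable integral cone with $reg(C)$ embedded and orientable, $\h^{n-2}(sing(C))=0$, and $\h^\al(sing(C))>0$ (by upper semicontinuity of Hausdorff density at the chosen density point). Now iteratively split off the translation-invariance algebra $\R^k\In\R^{n+1}$ of $C$, writing $C=\R^k\times C'$ with $C'$ a stable stationary integral cone in $\R^{n-k+1}$ whose singular set still has positive $\h^{\al-k}$-measure. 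The iteration terminates at some $m\leq 6$ with a stable cone in $\R^{m+1}$ that must be singular, contradicting Simons.

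For the curvature estimate when $n\leq 6$, the previous step gives $M$ smooth on $B^N_{\rho_0/2}(p)$. Argue again by contradiction: if the estimate fails, a sequence $M_j$ satisfies the hypotheses with $\rho_0\sup_{B^N_{\rho_0/2}}|A_{M_j}|\to\infty$. Apply the standard point-picking lemma to obtain base points $q_j\in reg(M_j)$ where $|A_{M_j}|(q)\,{\rm dist}(q,\de B^N_{\rho_0/2}(p))$ is essentially maximised, rescale the ambient normal coordinates by $|A_{M_j}|(q_j)$, and produce smooth stable stationary hypersurfaces $\ti M_j\In (B_{R_j}(0), \ti g_j)$ with $R_j\to\infty$, $\ti g_j\to\delta_{ij}$ in $C^2_{loc}$, uniform area ratios by monotonicity, $|A_{\ti M_j}|(0)=1$, and $|A_{\ti M_j}|\leq 2$ on compact sets. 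Elliptic compactness (graphical decomposition over tangent planes and Schauder estimates for the minimal surface system) yields a smooth limit $\ti M_\infty\In\R^{n+1}$ which is complete, minimal, orientable, stable, of Euclidean area growth, and with $|A_{\ti M_\infty}|(0)=1$. The tangent cone at infinity of $\ti M_\infty$ is a stable stationary integral cone in $\R^{n+1}$, hence a hyperplane by Simons; combined with monotonicity and \cite[Theorem 1]{ScS81} this forces $\ti M_\infty$ itself to be a hyperplane, contradicting $|A|(0)=1$.

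The main obstacle is handling the presence of an a-priori singular set in $M$: the stability of the blow-up cone $C$ relies on being able to use the second variation with cut-off functions supported away from $sing(M)$, which is exactly what the hypothesis $\h^{n-2}(sing(M))=0$ permits (through a capacity/Poincar\'e-type argument); and the passage from mere varifold convergence to smooth graphical convergence at each stage of the dimension reduction is precisely what \cite[Theorem 1]{ScS81} supplies. Without these two ingredients one cannot conclude that the iterated blow-ups are stable hypersurfaces of the form to which Simons' theorem applies.
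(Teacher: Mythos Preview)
The paper does not give its own proof of this statement: Theorem \ref{ssreg} is simply quoted as \cite[Corollary 1]{ScS81}, and the only commentary offered is the informal summary in the introduction that the regularity and compactness theorems of Schoen--Simon ``follow by a suitably adapted dimension-reduction argument of Federer coupled with the results of Simons \cite{S68}''. Your sketch is precisely an expansion of that sentence, and it is the correct outline of the original Schoen--Simon argument: the $\eps$-regularity of \cite[Theorem 1]{ScS81} feeds into Federer dimension reduction combined with Simons' classification of stable cones to bound the singular set, and the curvature estimate for $n\leq 6$ is then a point-picking blow-up to a complete stable minimal hypersurface in $\R^{n+1}$ with Euclidean volume growth, forced to be flat.

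One small imprecision worth flagging: in the dimension-reduction step you write ``iteratively split off the translation-invariance algebra $\R^k\In\R^{n+1}$ of $C$''. The Federer argument does not proceed by decomposing a single cone according to its translational symmetries; rather, one chooses a singular point $q'\in sing(C)\sm\{0\}$ which is an $\h^\al$-density point, blows up $C$ at $q'$, and obtains a new stable stationary cone $C'$ which now has an extra line of translation invariance (the direction of $q'$). Iterating this, each blow-up strictly increases the dimension of the translation-invariant subspace, so after at most $n$ steps one reaches a stable cone in some $\R^{m+1}$ with an isolated singularity at the origin, to which Simons applies. Your wording could be read as asserting that a single cone already splits as $\R^k\times C'$, which is not what is proved at each stage. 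Apart from this phrasing issue, and the routine but nontrivial verification that stability, orientability, embeddedness of $reg$, and the condition $\h^{n-2}(sing)=0$ all persist under each blow-up (which is where the capacity argument you mention enters), the plan is sound and matches the route taken in \cite{ScS81}.
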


\begin{remark}\label{extss}

We point out here that in the case $n=2$ a trivial extension allows one to have the same result as above but with the assumption $\h^0(sing(M))<\infty$ - we discuss this in the appendix. Moreover we mention that a highly non-trivial extension of this work, due to Neshan Wickramasekera \cite{W14} would in particular allow one to assume only that $\h^{n-1}(sing(M))=0$ for all dimensions.

\end{remark}

Here we prove the following in the smooth setting, but we note that it will hold assuming lower regularity of $N$. Let $B^N_{\rho_0}(p)$ for $p\in reg(M)$ be as above. In normal coordinates $B^{n+1}_{\rho_0}(0)$ (we will assume that $T_x M = \R^n= \{z_{n+1} = 0\} \In \R^{n+1}$), let $C_{\rho} = B^n_{\rho}(0)\times \R$ where $B^n$ denotes a ball in $\R^n=T_x M$. We will say that $M_k\to M$ smoothly and graphically at $p\in M$ if for all sufficiently large $k$, there exists some $\rho\leq \rho_0$ and smooth functions $u_k^1,\dots,u_k^L:B^n_{\rho}\to \R$ such that $M_k\cap C_{\rho}$ is the collection of graphs of the $u_k^i$ and $u_k^i\to 0$ in $C^k$ for all $k\geq 2$. Thus we can also find $\rho>0$ such that we can consider the $u_k^i$ to be defined on $M\cap B^N_{\rho}$. We note that if the convergence is smooth and graphical away from a finite set $\mathcal{Y}$ and $M$ is connected and embedded (so that $M\sm \mathcal{Y}$ is also connected) then the number of leaves in the convergence is a constant over $M\sm \mathcal{Y}$. 
\begin{theorem}\label{main}
Let $2\leq n \leq 6$ and $N^{n+1}$ be a smooth closed Riemannian manifold. If $\{M_k^n\}\In N$ is a sequence of closed, connected and embedded minimal hypersurfaces with 
$$\h^n (M_k) \leq \Lambda <\infty \,\,\,\,\,
\text{and}\,\,\,\,\,\, index(M_k)\leq I$$
for some fixed constants $\Lambda\in \R$, $I\in \N$ independent of $k$. Then up to subsequence, there exists a closed connected and embedded minimal hypersurface $M\In N$ where $M_k \to M$ in the varifold sense with$$\h^n (M) \leq \Lambda <\infty \,\,\,\,\,
\text{and}\,\,\,\,\,\, index(M)\leq I.$$ 
Now, assuming that $M_k\neq M$ eventually, we have that the convergence is smooth and graphical for all $x\in M\sm  \mathcal{Y}$ where $\mathcal{Y}=\{y_i\}_{i=1}^K\In M$ is a finite set with $K\leq I$ and the following dichotomy holds:\begin{itemize}
\item if the number of leaves in the convergence is one then $\mathcal{Y}=\emptyset$ i.e. the convergence is smooth and graphical everywhere, moreover \begin{itemize}
\item if $M$ is two-sided and $M_k\cap M=\emptyset$ eventually then $M$ is stable 
\item if $M$ is two-sided and $M_k\cap M\neq \emptyset$ eventually then $index(M)\geq 1$
\end{itemize}
\item if the number of sheets is $\geq 2$ 
\begin{itemize}
\item if $N$ has $Ric_N >0$ then $M$ cannot be one-sided
\item if $M$ is two-sided then $M$ is stable. 
\end{itemize}
\end{itemize}

\end{theorem}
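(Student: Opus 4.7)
The plan is to combine varifold compactness with a local stability alternative driven by the index bound: on balls where $M_k$ is eventually stable, Theorem \ref{ssreg} provides uniform curvature bounds and hence smooth graphical convergence, while the points at which stability fails carry at most $I$ linearly independent negative directions of the index form. First I would apply Allard's compactness theorem together with $\mathcal H^n(M_k) \le \Lambda$ to extract a subsequence $M_k \to M$ as integral $n$-varifolds, with $M$ stationary in $N$ and $\mathcal H^n(M) \le \Lambda$. Call $y \in \mathrm{spt}(M)$ \emph{bad} if for every $\rho > 0$ there are infinitely many $k$ for which $M_k \cap B_\rho^N(y)$ is unstable. Given distinct bad points $y_1, \dots, y_K$, pick $\rho > 0$ so the $B_\rho^N(y_i)$ are pairwise disjoint, pass to a common subsequence, and choose $\phi_i \in C_c^\infty(B_\rho^N(y_i))$ with $Q_{M_k}(\phi_i, \phi_i) < 0$; disjoint supports give $K$ linearly independent negative directions for $Q_{M_k}$, forcing $K \le \mathrm{index}(M_k) \le I$, so the bad set $\mathcal Y$ satisfies $|\mathcal Y| \le I$.

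For each $x \in \mathrm{spt}(M) \setminus \mathcal Y$ there exist $\rho(x) > 0$ and a subsequence on which $M_k \cap B_{\rho(x)}^N(x)$ is stable for all large $k$; Theorem \ref{ssreg} applied to the smooth $M_k$ (for which $\mathrm{sing}(M_k) = \emptyset$) gives $|A_{M_k}| \le C/\rho(x)$ on $B_{\rho(x)/2}^N(x)$, and standard elliptic bootstrap yields smooth graphical convergence. A countable cover and diagonalisation produce smooth graphical convergence on $\mathrm{spt}(M) \setminus \mathcal Y$ with a locally constant leaf count $L$. Since $\mathcal Y$ is finite, $\mathcal H^{n-2}(\mathrm{sing}(M)) = 0$ (Remark \ref{extss} handles $n = 2$); since $M$ is the varifold limit of eventually stable $M_k$ on any compactly contained subset of $B_\rho^N(y) \setminus \{y\}$, the limit is stable there, and hence on the full ball $B_\rho^N(y)$. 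Theorem \ref{ssreg} applied directly to $M$ then promotes smoothness across $\mathcal Y$, so $M$ is a smooth closed embedded minimal hypersurface, connected because each $M_k$ is. To pass the index bound to the limit, any negative test function for $Q_M$ can be truncated by a logarithmic cutoff vanishing near $\mathcal Y$ (admissible since $\mathcal H^{n-2}(\mathcal Y) = 0$) without destroying negativity, and lifted via the graph parametrisation to $M_k$; by smooth convergence the $Q_{M_k}$-values of the lifts remain negative for large $k$, so an $(I+1)$-dimensional negative space on $M$ would contradict $\mathrm{index}(M_k) \le I$.

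Assume now $M_k \neq M$ eventually, and parametrise $M_k$ over $M \setminus \mathcal Y$ by graphs $u_k^1, \dots, u_k^L$. The dichotomy rests on extracting bounded Jacobi fields from normalised graphs. If $L = 1$, I set $v_k := u_k / \|u_k\|_{C^0}$; this solves the linearised minimal surface equation whose coefficients approach those of the Jacobi operator $L_M$, so by elliptic regularity $v_k \to v$ in $C^2_{\mathrm{loc}}(M \setminus \mathcal Y)$ with $\|v\|_\infty = 1$ and $L_M v = 0$. Boundedness of $v$ and finiteness of $\mathcal Y$ yield a removable singularity for the linear elliptic equation $L_M v = 0$, so $v$ extends to a smooth global Jacobi field on $M$. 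Ruling out bad points in the $L = 1$ case requires a blow-up analysis at a putative $y \in \mathcal Y$: rescaling at the point of maximal curvature produces a complete non-flat embedded minimal hypersurface in $\mathbb R^{n+1}$ of finite index with a single planar end, which for $n \le 6$ must be a hyperplane, a contradiction; hence $\mathcal Y = \emptyset$. If further $M_k \cap M = \emptyset$ eventually, then $u_k$ has fixed sign, so $v \ge 0$ and then $v > 0$ by the strong maximum principle for the Schr\"odinger operator $L_M$, forcing $\lambda_1(L_M) = 0$ and stability of $M$; if $M_k \cap M \ne \emptyset$ eventually then $v$ changes sign, cannot be a ground state of $L_M$, and $\lambda_1(L_M) < 0$, giving $\mathrm{index}(M) \ge 1$. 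When $L \ge 2$ I apply the same extraction to $w_k := (u_k^2 - u_k^1)/\|u_k^2 - u_k^1\|_{C^0}$ on each simply-connected patch; the maximum principle produces a positive local Jacobi field, which globalises to a positive section of the normal bundle in the two-sided case (making $M$ stable) and to a positive Jacobi field on the oriented double cover $\tilde M$ in the one-sided case --- but then $\tilde M$ is a stable two-sided minimal hypersurface, which $\mathrm{Ric}_N > 0$ forbids.

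\textbf{Main obstacles.} The hardest step is the regularity of $M$ across $\mathcal Y$ in the second paragraph: invoking Theorem \ref{ssreg} on the limit requires varifold stability of $M$ in a full neighbourhood of each bad point $y$, which must be upgraded from stability on the punctured neighbourhood using the codimension bound $\mathcal H^{n-2}(\mathcal Y) = 0$ (and Remark \ref{extss} when $n = 2$). The other delicate point is the implication $L = 1 \Rightarrow \mathcal Y = \emptyset$: it requires a self-contained blow-up argument together with classification of complete finite-index embedded minimal hypersurfaces in $\mathbb R^{n+1}$ with $n \le 6$ and a single planar end.
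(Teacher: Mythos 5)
Your skeleton matches the paper's: Allard compactness, a finite bad set bounded by the index, Schoen--Simon curvature estimates where $M_k$ is eventually stable, and a Jacobi field built from the graph functions to drive the dichotomy. However, several of your key technical steps deviate from the paper's, and at least two of them have genuine gaps.

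The most serious issue is your treatment of stability of the limit near the bad set. You write that ``$M$ is the varifold limit of eventually stable $M_k$ on any compactly contained subset of $B^N_\rho(y)\setminus\{y\}$.'' This is not true. Away from the bad set, each \emph{point} admits a small ball on which $M_k$ is eventually stable, but stability is not a local property in this sense: $M_k$ may be unstable on a compact annulus even if it is stable on each ball of a covering. (Think of a thin catenoidal neck passing through the annulus --- every small ball misses the neck's unstable mode, but the annulus supports it.) Consequently you cannot conclude that $M$ is stable on the punctured ball by passing to the limit this way, and without that stability you cannot invoke Theorem \ref{ssreg} on $M$ to remove the singularities. The paper avoids this trap: Claim 2 argues \emph{by contradiction on the limit itself}, showing that if $M$ were unstable on arbitrarily small punctured balls one could construct infinitely many negative test functions for $\delta^2_M$ with pairwise disjoint (annular) supports, each of which transfers to $M_k$ by varifold convergence of the second variation, contradicting $\mathrm{index}(M_k)\le I$. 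This annular packing argument is the essential ingredient your proposal is missing.

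Your route to $L=1\Rightarrow\mathcal Y=\emptyset$ is also substantially different and more fragile. You propose a blow-up at a putative bad point and an appeal to a classification of complete embedded finite-index minimal hypersurfaces in $\mathbb R^{n+1}$ with a single planar end as hyperplanes for $n\le 6$. Such a classification is not cited, is delicate in dimensions $3\le n\le 6$, and is not needed. The paper's Claim 4 is far more elementary: since the convergence is single-sheeted and $M$ has multiplicity one, the mass ratio of $M$ in small balls about any $x\in M$ is close to $1$; by varifold convergence the same is then true of $M_k$, and Allard's regularity theorem gives graphical $C^{1,\alpha}$ (hence smooth, by elliptic bootstrap) control of $M_k$ across $x$. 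No blow-up or rigidity theorem is required.

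Two smaller points. First, your argument that the index bound passes to the limit (truncate negative test functions for $Q_M$ near $\mathcal Y$ and lift to $M_k$) is plausible but glosses over linear independence of the lifts: if $I+1$ negative directions on $M$ lift to $M_k$, you must show the lifts remain linearly independent uniformly in $k$. The paper does this quantitatively via the monotonicity formula: the tangential projections $\int_{M_k}|X_i^{\top_k}|^2\to 0$, so the Gram matrix of the lifted normal sections converges to the identity. Second, in the $L\ge 2$ case you extract a local Jacobi field from $(u_k^2-u_k^1)/\|u_k^2-u_k^1\|_{C^0}$ and speak of a ``removable singularity,'' but the normalization must be carefully chosen so the limit does not vanish identically near $\mathcal Y$ and remains bounded across $\mathcal Y$; the paper normalizes at an interior point, uses the Harnack inequality, and then proves boundedness near $\mathcal Y$ by foliating a cylinder by minimal graphs (White's proposition) and applying the maximum principle. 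Your proposal does not address the possibility that the $C^0$-mass of the graph differences concentrates at $\mathcal Y$.
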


\begin{remark}

The index bound is necessary to obtain the above convergence by considering a family of Scherk surfaces, or a family of Costa-Hoffmann-Meeks surfaces with genus going to infinity in a Euclidean ball. 

We also remark again that control on the topology and volume alone is not enough to ensure such a compactness result, given the examples of Hsiang \cite{H83} mentioned in the introduction. 
\end{remark}

We recall the result of Choi-Schoen \cite{CSc85}: 
\begin{theorem}{\cite[Theorem 1]{CSc85}}\label{thCS}
Let $N$ be a compact 3-dimensional manifold with positive Ricci curvature. Then the space of compact embedded minimal surfaces of fixed topological type in $N$ is compact in the $C^k$ topology for any $k\geq 2$. Furthermore if $N$ is real analytic, then this space is a compact finite-dimensional real analytic variety. 
\end{theorem}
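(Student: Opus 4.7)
The plan is to reduce Theorem \ref{thCS} to the main theorem of this paper, Theorem \ref{main}, in the dimension $n=2$ case, and then upgrade the resulting smooth compactness to an analytic variety structure when $N$ is real analytic. Fix the topological type, say genus $\gamma$, and let $\{M_j\} \In N^3$ be any sequence of closed embedded minimal surfaces of genus $\gamma$.

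First I would use the two a priori estimates recalled in the introduction. Since $Ric_N \geq \al > 0$, the Choi-Wang/Choi-Schoen area inequality yields
\begin{equation}
\h^2(M_j)\leq \fr{16\pi}{\al}\left( \fr{2}{|\pi_1(N)|} - \fr12 \chi(M_j)\right) \leq \Lambda(N,\gamma),
\end{equation}
and the Ejiri-Micallef estimate \cite[Theorem 4.3]{EM08} converts this area bound, together with the fixed topological type, into an index bound $\mathrm{index}(M_j) \leq I(N,\gamma)$. The hypotheses of Theorem \ref{main} are therefore satisfied, producing a subsequence converging in the varifold sense to a closed connected embedded minimal surface $M \In N$ of area at most $\Lambda$ and index at most $I$, with the convergence smooth and graphical away from a finite set $\mathcal{Y}$.

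Next I would rule out both multiplicity $\geq 2$ and the appearance of $\mathcal{Y}$. The positivity of $Ric_N$ forbids the existence of any stable two-sided minimal hypersurface in $N$, since taking $f \equiv 1$ in \eqref{secvar} gives $Q(1,1) < 0$. Invoking the dichotomy of Theorem \ref{main}: if the number of leaves were $\geq 2$, then the two-sided case would force $M$ to be stable, contradicting the above, while the one-sided case is excluded directly by the hypothesis $Ric_N > 0$. Hence the convergence has multiplicity one, so $\mathcal{Y} = \emptyset$, and for $j$ sufficiently large $M_j$ is a smooth normal graph over $M$ with graph functions tending to zero in every $C^k$ norm. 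In particular $M_j$ is diffeomorphic to $M$ for large $j$, so $M$ has genus $\gamma$, and the subsequential convergence is in $C^k$ for every $k \geq 2$. This establishes the smooth compactness part of the theorem.

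For the analytic variety structure under real analyticity of $N$, any smooth minimal surface in $N$ is automatically real analytic by interior regularity for the quasilinear elliptic minimal surface equation with real-analytic coefficients. To describe the moduli space $\mathcal{M}_\gamma$ locally around a fixed $M_0$, I would parametrise nearby minimal surfaces as normal exponential graphs $\exp_{M_0}(f\nu)$ with $f \in C^{2,\alpha}(M_0)$ small (passing to the orientable double cover in the one-sided case) and write the minimal-surface condition as a real-analytic elliptic equation $\mathcal{F}(f) = 0$ whose linearisation at $f=0$ is the Jacobi operator $J_{M_0}$ appearing in \eqref{secvar}. Since $J_{M_0}$ is self-adjoint and Fredholm of index zero, a Lyapunov-Schmidt reduction via the real-analytic implicit function theorem produces a real-analytic obstruction map $\Phi : \ker J_{M_0} \to \mathrm{coker}\, J_{M_0}$ between finite-dimensional spaces whose zero locus models $\mathcal{M}_\gamma$ near $M_0$. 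Thus $\mathcal{M}_\gamma$ is locally a real-analytic variety of dimension at most $\dim \ker J_{M_0}$, and its compactness is precisely what was just proved. The main obstacle in executing this last step is the careful analytic bookkeeping for the Kuranishi reduction; the geometric content is entirely captured by Theorem \ref{main}.
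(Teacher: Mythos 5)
The paper does not actually supply its own proof of Theorem \ref{thCS}: it quotes it from \cite{CSc85}, and the only proof-like content is the remark that, via \eqref{dis} and Corollary \ref{CS}, the compactness half of the statement is recovered when $n=2$. Your reduction --- converting the genus bound to an area bound via Choi-Wang/Choi-Schoen, then to an index bound via Ejiri-Micallef, then invoking Theorem \ref{main} and discarding the multi-sheeted and the single-sheeted stable cases using the dichotomy together with $Ric_N>0$ --- is precisely the route the paper indicates and is the content of the proof of Corollary \ref{CS} specialised to $n=2$. So for the smooth-compactness half, your argument is correct and takes the same approach.

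For the ``compact finite-dimensional real analytic variety'' clause, however, the paper proves nothing; it delegates that entirely to \cite{CSc85} (and Corollary \ref{CS} makes no such claim). Your Lyapunov--Schmidt/Kuranishi sketch via the Jacobi operator, together with interior analyticity of minimal graphs, is indeed the standard mechanism --- and is essentially Choi-Schoen's own argument --- but as written it is only a sketch: the passage to the double cover in the one-sided case, the choice of Banach spaces making $\mathcal{F}$ real-analytic and Fredholm, and the gluing of the local charts into a global variety structure all need to be made precise, and none of that is carried out (or needed) in this paper. In short: same approach and correct for the compactness half; for the variety half you have identified the right missing ingredient, but you are supplying content that lies outside the scope of the paper under review.
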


A corollary of our main theorem is the following strong compactness result for embedded, closed minimal hypersurfaces in closed manifolds $N^{n+1}$ with $Ric_N >0$ and $2\leq n \leq 6$. We remark that, viewing \eqref{secvar}, such manifolds do not admit two-sided  stable minimal hypersurfaces. In view of \eqref{dis} we note that this recovers Theorem \ref{thCS} when we restrict to $n=2$. 

\begin{cor}\label{CS}
Let $N^{n+1}$ be a closed Riemannian manifold with $Ric_N >0$ and $2\leq n\leq 6$. Denote by $\mathfrak{M}^n(N)$ the class of closed, smooth, and embedded minimal hypersurfaces $M\In N$. Then given any $0<\Lambda <\infty$ and $I\in \mathbb{N}$ the class
$$\mathcal{M}(\Lambda,I):=\{M\in \mathfrak{M}^n_k(N) : \text{$\h^n(M)\leq \Lambda$, $index(M)\leq I$}\}$$
is compact in the $C^k$ topology for all $k\geq 2$ with single-sheeted graphical convergence to some limit $M\in \mathcal{M}(\Lambda, I)$. 

\end{cor}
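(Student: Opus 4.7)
The plan is to apply Theorem \ref{main} directly and use the positivity of $Ric_N$ to rule out every branch of the dichotomy except the one asserted in the corollary. Let $\{M_k\}\In \mathcal{M}(\Lambda,I)$. Since each $M_k$ is closed, smooth and embedded with $\h^n(M_k)\leq \Lambda$ and $index(M_k)\leq I$, Theorem \ref{main} produces a subsequence (not relabeled) converging in the varifold sense to a closed, connected, embedded minimal hypersurface $M\In N$ with $\h^n(M)\leq \Lambda$ and $index(M)\leq I$; in particular $M\in \mathcal{M}(\Lambda,I)$. If $M_k = M$ eventually then nothing is to be done, so assume $M_k\neq M$ eventually and apply the dichotomy in Theorem \ref{main}.

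The main task is to rule out multi-sheeted convergence. Suppose the number of leaves is $\geq 2$. If $M$ is one-sided, Theorem \ref{main} directly forbids this under $Ric_N>0$. If $M$ is two-sided, Theorem \ref{main} forces $M$ to be stable; but inserting the test function $f\equiv 1$ into \eqref{secvar} yields
\begin{equation*}
Q(1,1) = -\int_M \bigl(|A|^2 + Ric_N(\nu,\nu)\bigr)\id V_M < 0,
\end{equation*}
since $Ric_N>0$, contradicting stability. Hence the convergence must be single-sheeted.

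In the single-sheeted branch Theorem \ref{main} gives $\mathcal{Y}=\emptyset$, so the convergence $M_k\to M$ is smooth and graphical at every point of $M$. This precisely means $M_k\to M$ in $C^k$ for every $k\geq 2$. The limit satisfies $\h^n(M)\leq \Lambda$ and $index(M)\leq I$, so $M\in\mathcal{M}(\Lambda,I)$, completing the proof.

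The only real obstacle is bookkeeping: Theorem \ref{main} does almost all of the work, and the positive Ricci assumption is used only through the elementary observation that two-sided stable minimal hypersurfaces cannot exist in $N$, which also explains why the ambient one-sided examples (such as $\mathbb{RP}^n\In \mathbb{RP}^{n+1}$) are not excluded from $\mathcal{M}(\Lambda,I)$ itself but only from arising as multi-sheeted limits.
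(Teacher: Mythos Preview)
Your proof is correct and follows exactly the same approach as the paper: apply Theorem \ref{main}, then use $Ric_N>0$ to rule out the multi-sheeted branch (one-sided limits are excluded directly by the theorem, two-sided limits would be stable, which \eqref{secvar} with $f\equiv 1$ forbids), leaving only smooth single-sheeted convergence. The paper compresses this into a single sentence, but the logical content is identical to what you have written.
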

\begin{remark}\label{Choi-Schoen}
Notice that by an easy argument we have the existence of some $C=C(\Lambda,I, N)$ such that for any $M\in \mathcal{M}(\Lambda, I)$ (see also \cite[Theorem 2]{CSc85})
$$\sup_{M} |A| \leq C.$$

We also have that eventually the graphical, single-sheeted convergence implies that $M_k$ is diffeomorphic to $M$ for sufficiently large $k$. Thus there exists some $\ti{C}=\ti{C}(\Lambda, I, N,i)$ such that for any $M\in \mathcal{M}(\Lambda, I)$, letting $b_i(M)$ denote the $i^{th}$ Betti number we must have 
$$b_i(M) \leq \ti{C}.$$

\end{remark}

\begin{proof}[Proof of Corollary \ref{CS}]
The proof is essentially trivial given Theorem \ref{main} - given a sequence in this class we know that \emph{if we do not have} smooth, single sheeted graphical convergence to some limit $M$ then $M$ is two sided and it is stable, which cannot happen because $Ric_N >0$.\end{proof}

\section{Supporting results}
Whilst our main concern here is with smooth hypersurfaces, we sometimes work within the class of integrable varifolds - see \cite{S83} for an introduction. Thus we will assume that we are working with $M\in IV_n(N)$ (the space of integral $n$-varifolds in $N$) and we let 
$$reg(M) = \{ x\in M | B_{\eps}(x)\cap M \,\,\text{is an embedded, connected $C^2$ manifold}\}$$ 
with $sing(M) = spt(M)\sm reg(M)$. Therefore $\h^{n-1}(sing(M))=0$ and $reg(M)$ being orientable generalises the notion of closed orientable hypersurfaces. For us $N$ is a smooth manifold, so wherever $M$ is stationary and $C^2$, it must be smooth. 

The first variation of volume with respect to $C_c^1$ vector fields $X\in \Gamma(TN)$, corresponds to the derivative of the variation of $M$, $\psi_t(M)$ induced by $X$. Therefore $M$ is stationary if 
$$0=\pl{}{t}\vlinesub{t=0} Vol(\psi_t(M)) = \int_{M} div_{M} (X) \id \h^n =: \dl_M(X)$$ for all compactly supported $C^1$ vector fields $X$, where $X(x) = \pl{\psi_t(x)}{t}|_{_{_{t=0}}}$. Since we are considering closed $N$ we can assume that $N\emb \R^N$ is isometrically embedded and consider the first and second variation formula for $M\In \R^N$ - except we obviously restrict to vector fields $X\in \Gamma(TN)$. The stability of $M$ in some open set $U\In N$, is the assumption of positivity of the second variation, derived in \cite{S83} for $M\In \R^N$:
\begin{eqnarray*}
0&\leq& \pl{^2}{t^2}\vlinesub{t=0}(Vol(\psi_t(M)))\\
&=&\int_{M} div_{M}(Z) + (div_M(X))^2 + \sum_i |(\D_{\tau_i} X)^{\bot}|^2 - \sum_{i,j} (\D_{\tau_i} X \cdot \tau_j )(\D_{\tau_j} X \cdot \tau_i) \id\h^n \\
&=:& \dl^2_{M}(X)
\end{eqnarray*}
for all $X\in \Gamma(T\R^N)$, such that $X(x)\in T_x N$ for all $x\in N$ and $X$ is compactly supported in some open $\ti{U}\In\R^{n+1}$ where $\ti{U}\cap N = U$. Here $Z(x)=\pl{^2\psi_t(x)}{t^2}|_{_{_{t=0}}} = \D_X X$ and $\{\tau_i\}$ is some orthonormal basis of $M$ at a given point $x$. 
If $X$ is supported away from $sing(M)$, then setting $X^{\bot}$ to be the projection of $X$ to the normal bundle ${\rm{N}} M\In T N$ and $\D^{\bot}$ the normal  connection in ${\rm{N}} M$ we have  
$$\dl^2_{M}(X) = \int_{M} |\D^{\bot} X^{\bot}|^2 -  |A|^2 |X^{\bot}|^2 - Ric(X^{\bot},X^{\bot}) \id\h^n=Q(X^{\bot},X^\bot).$$ 

Thus if $M$ is two-sided in $N$, setting $f=\la X, \nu\ra \in C^1_c(reg(M))$, for some unit normal $\nu$ along $reg(M)$ then it can be checked that $$\dl^2_{M}(X) = \int_{M} |\D f|^2 - (|A|^2 + Ric_N(\nu,\nu))f^2 \id\h^n= Q(f,f).$$
In the case that $M$ is two-sided and $\h^{n-2}(sing(M)) = 0$, stability is equivalent to the positivity of $Q(f,f)$ along bounded and locally Lipschitz functions $f$ (possibly non-zero over the singularities of $spt(M)$), which follows from an easy cut-off argument. 

Furthermore, if $M$ is two-sided and $n=2$ we can relax the assumption on the singular set to $\h^0(sing(M)) < \infty$ and still work with such $f$ - this is discussed in the appendix and the distinction is important for us. We remark that the results of Schoen-Simon can be trivially extended to the case that $\h^0(sing(M))<\infty$ when $n=2$ due to this fact. Actually, in \cite{W14} a much more general regularity and compactness theory is developed for stable minimal hypersurfaces, where a weaker assumption on the singular set is imposed in all dimensions; in particular one need only assume that $\h^{n-1}(sing(M))=0$ to recover the results of Schoen-Simon.

It is worth mentioning here that if $M_k\to M$ varifold converge then we have both $\dl_{M_k}(X)\to \dl_M (X)$ and $\dl^2_{M_k}(X)\to \dl^2_M (X)$ for any $X\in C_c^1(TN)$. In particular, if there are $K$ disjoint open sets $U_1\,\dots U_K$ on which $M$ is unstable, then eventually $M_k$ is unstable on each $U_i$ for $1\leq i\leq K$ and thus has $index(M_k)\geq K$ c.f. Lemma \ref{sball}. We also note Allard's compactness theorem: for a sequence of bounded mass and stationary $\{M_k\}\In IV_n(N)$ there exists some bounded mass and stationary $M\in IV_n(N)$ such that a subsequence converges in the sense of varifolds (i.e. in the sense of Radon measures on the Grassmann bundle of $n$-planes in $TN$); moreover if $\h^n(M_k)$ is uniformly bounded then they converge in Hausdorff distance also - a consequence of the monotonicity formula. Thus if $M_k$ are connected then $M$ must be as well.

\begin{lemma}\label{sball}
Suppose $M\emb N$ is a smooth, embedded hypersurface with $index(M)=I$. Given any collection of $I+1$ open sets $\{U_i\}_{i=1}^{I+1}$, $U_i\In N$ and $U_i\cap U_j = \emptyset$ when $i\neq j$ then we must have that $M$ is stable in $U_i$ for some $1\leq i\leq I+1$.  
\end{lemma}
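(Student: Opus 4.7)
The plan is to argue by contradiction using the standard variational characterisation of the Morse index as the maximal dimension of a subspace on which the quadratic form $Q$ is negative definite.

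Suppose, towards a contradiction, that $M$ is unstable in each of the disjoint open sets $U_1,\dots,U_{I+1}$. Then for every $i$ there exists a smooth section $v_i\in\Gamma(\mathrm{N}M)$ (or, in the two-sided case, a function $f_i\in C^\infty_c(M\cap U_i)$ via $v_i=f_i\nu$) compactly supported in $M\cap U_i$ with $Q(v_i,v_i)<0$. The key observation is that since $U_i\cap U_j=\emptyset$ for $i\neq j$, the sections $v_i$ and $v_j$ have disjoint supports, so the pointwise integrand in
\[
Q(v_i,v_j)=\int_M \langle\nabla^\bot v_i,\nabla^\bot v_j\rangle-|A|^2\langle v_i,v_j\rangle-\mathrm{Ric}_N(v_i,v_j)\,dV_M
\]
vanishes identically and hence $Q(v_i,v_j)=0$.

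Consider now the linear span $V:=\mathrm{span}\{v_1,\dots,v_{I+1}\}\subset\Gamma(\mathrm{N}M)$. Disjointness of supports immediately gives linear independence, so $\dim V=I+1$. Moreover, for any nonzero $v=\sum_{i=1}^{I+1}c_i v_i\in V$,
\[
Q(v,v)=\sum_{i,j}c_i c_j Q(v_i,v_j)=\sum_{i=1}^{I+1}c_i^2\,Q(v_i,v_i)<0,
\]
so $Q$ is negative definite on the $(I+1)$-dimensional subspace $V$.

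By the standard min-max characterisation, this forces $\mathrm{index}(M)\geq I+1$, contradicting the hypothesis $\mathrm{index}(M)=I$. Hence $M$ must be stable in at least one of the $U_i$. The only mild subtlety to watch for is the one-sided versus two-sided distinction, but the argument is identical in both cases once one works with sections of the normal bundle rather than scalar functions; no genuine obstacle arises.
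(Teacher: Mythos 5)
Your argument is correct and is essentially the same as the paper's: the paper takes $X_i\in C_c^1(U_i)$ with $\delta^2_M(X_i)<0$, projects to the normal bundle to get $I+1$ mutually orthogonal nonzero sections with $Q<0$, and concludes $\mathrm{index}(M)>I$. You simply spell out explicitly why disjoint supports make the cross-terms of $Q$ vanish and why $Q$ is therefore negative definite on the $(I+1)$-dimensional span, which the paper leaves implicit.
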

\begin{proof}
We proceed by contradiction; suppose that there exists some collection of $\{U_i\}$ as above but for all $1\leq i\leq I+1$ there exists $X_i\in C_c^1(U_i)$ with $\dl^2_M(X_i) <0$. Letting $X^{\bot}_i$ be the projection of $X_i$ along the normal bundle of $M$, we have a collection of $I+1$ mutually orthogonal sections (all of which are non-zero) and with $Q(X^\bot_i,X^\bot_i)<0$ for all $i$. Thus $index(M)>I$, a contradiction. 
\end{proof}

\section{Proof of Theorem \ref{main}}

\begin{proof}[Proof of Theorem \ref{main}]
Allard's compactness theorem \cite[chapter 6]{A72} tells us that there exists some $M$ such that (up to subsequence) $M_k\to M$ in the varifold sense (and thus in Hausdorff distance), with $M$ stationary, integral and connected. We can moreover choose a subsequence and assume wlog that $index(M_k) = I$ for all $k$.

Before we continue the proof we need a lemma which is an easy corollary of the work of Schoen-Simon, Theorem \ref{ssreg}.

\begin{lemma}\label{nosing}
Let $x \in M$ with $M$ as above and $\sigma^N$ be the injectivity radius of $N$. Assuming the hypotheses of Theorem \ref{main}, if there exist $R\in(0,\fr{\sigma^N}{2}),k_0\in \mathbb{N}$ such that for all $k\geq k_0$ we have 
$M_k$ is stable in $B^N_R(x)$ then $M\cap B^N_{\fr{R}{2}}(x)$ is smooth and the convergence is smooth and graphical for all $y\in B^N_{\fr{R}{2}}(x)\cap M$, again up to a subsequence. 
\end{lemma}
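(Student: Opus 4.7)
The plan is to apply the Schoen--Simon curvature estimate (Theorem \ref{ssreg}) directly to the smooth approximants $M_k$ rather than to the limit $M$. This is advantageous because the hypothesis $\h^{n-2}(sing(M_k)) = 0$ is vacuous for smooth $M_k$, whereas verifying it a priori for $M$ would be delicate. Once a uniform local curvature bound on $M_k$ is in hand, smooth graphical convergence is a standard consequence of elliptic regularity for the minimal surface equation.

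First I would fix an arbitrary $y \in M \cap B^N_{R/2}(x)$ and pick $\rho_0 < R/2 - d(y,x)$. By Hausdorff convergence $M_k \to M$ (a consequence of varifold convergence together with the monotonicity formula), there exist $y_k \in M_k$ with $y_k \to y$, and for $k$ large $B^N_{\rho_0}(y_k) \subset B^N_R(x)$. The ball $B^N_R(x)$ is simply connected (as $R < \sigma^N$), so the smooth embedded hypersurface $M_k \cap B^N_R(x)$ has trivial normal bundle and is in particular orientable. By hypothesis $M_k$ is stable in $B^N_{\rho_0}(y_k)$ and has mass at most $\Lambda$. Applying Theorem \ref{ssreg} at $p = y_k$ (using $n \leq 6$) would then yield
$$\sup_{B^N_{\rho_0/2}(y_k) \cap M_k} |A_{M_k}| \leq \fr{C}{\rho_0}$$
for a constant $C = C(n, \rho_0^{-n}\Lambda, \mu_1 \rho_0)$ independent of $k$. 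Since $y_k \to y$, this translates into a uniform $|A|$-bound for $M_k$ on some fixed open neighborhood $U_y$ of $y$, valid for all $k$ large.

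Given this curvature bound, I would then express $M_k \cap U_y$ as a finite disjoint union of smooth graphs over the tangent plane $T_y M$ with uniform $C^{1,1}$ estimates. The monotonicity formula combined with the mass bound $\Lambda$ controls the number of sheets uniformly in $k$, so after passing to a subsequence this number is constant. Each family of graphs satisfies the minimal surface equation in normal coordinates with uniformly bounded $C^{1,\alpha}$ norm, and hence Schauder theory and bootstrapping give $C^\infty$ convergence to smooth minimal graphs that make up $M$ near $y$. Exhausting $M \cap B^N_{R/2}(x)$ by countably many such points and diagonalising would yield smooth graphical convergence at every $y \in M \cap B^N_{R/2}(x)$, and in particular smoothness of $M$ there. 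The only mildly subtle ingredient is the orientability of the approximants in the stability region, which is handled by simple connectedness of the geodesic ball; otherwise the proof is a clean assembly of Schoen--Simon's curvature estimate with standard bounded-curvature compactness for minimal hypersurfaces.
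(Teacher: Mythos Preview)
Your proposal is correct and follows essentially the same route as the paper: apply the Schoen--Simon curvature estimate (Theorem \ref{ssreg}) directly to the smooth approximants $M_k$ in the stability region, use simple connectedness of the geodesic ball to guarantee orientability, and then invoke standard bounded-curvature compactness. The only cosmetic difference is that the paper applies the estimate at points of $M_k\cap B^N_{3R/4}(x)$ with a fixed radius $R/4$ and then covers, whereas you centre at points $y_k\to y\in M$ with a $y$-dependent radius and diagonalise; both yield the same conclusion.
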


\begin{proof}[Proof of Lemma \ref{nosing}]
Theorem \ref{ssreg} tells us that for any $y \in M_k\cap B_{\fr{3R}{4}}^N(x)$ we must have 
$$\sup_{B^N_{\fr{R}{4}}(y)} |A_k| \leq C,$$
where $C=C(N,n,\Lambda, R)<\infty$. This follows because $N$ is compact; there is a uniform upper bound on 
$$\mu_1 := \sup_{y\in N} \max \left\{\sup_{B^{n+1}_{\fr{\sigma^N}{2}}(0)}\left| \pl{g_{ij}}{x^k} \right|, \sqrtsign{\sup_{B^{n+1}_{\fr{\sigma^N}{2}}(0)}\left| \ppl{g_{ij}}{x^k}{x^l} \right|} \right\}$$
where the above is taken over normal coordinate balls centred at $y$. Moreover $M_k\cap B^N_r(x)$ must be orientable for $r<\sigma^N$ and all $k$ since $B^N_r$ is simply connected. An easy covering argument gives that $B_{\fr{3R}{4}}(x)\cap M_k$ have uniformly bounded second fundamental form and volume and a standard compactness argument finishes the proof. 

\end{proof}

\paragraph{Claim 1: The singular set of $M$ has at most $I$ points.}\label{C1}
Suppose for a contradiction that there are at least $I+1$ points $\{x_i\}_{i=1}^{I+1}\In sing(M)$ and fix 
$$\eps_0 < \fr12 \min \{\min_{i\neq j} d_g(x_i,x_j), \sigma^N\}.$$ Lemma \ref{nosing} tells us that there must be some subsequence (not re-labelled) such that $M_k$ is unstable in $B_{\eps_0}^N(x_i)$ for all $1\leq i\leq I+1$. 
By construction we have $I+1$ disjoint open sets and Lemma \ref{sball} tells us that for each $k$, $M_k$ must be stable in one of them, a contradiction. Thus we have proved the claim.

\paragraph{Claim 2: For each $x_i\in sing(M)$ there exists some $\eps_i$ such that $M$ is stable in $B^N_{\eps_i}(x_i)\sm \{x_i\}$.} 

Again we argue by contradiction. Suppose therefore that for all $\eps>0$ there exists some $X_{\eps} \in C_c^1(B^N_{\eps}(x_i)\sm \{x_i\})$ such that $\dl^2_M(X_\eps)< 0$. Pick
$$\text{$\eps_{i_1}>0$ and $X_1 \in C_c^1(B^N_{\eps_{i_1}}(x_i)\sm \{x_i\})$ satisfying $\dl^2_M(X_1)<0$. }$$ Now pick $0<\eps_{i_2}<\eps_{i_1}$ so that $spt(X_1)\In B^N_{\eps_{i_1}}(x_i)\sm B^N_{\eps_{i_2} }(x_i)$ and of course there exists some $X_{2} \in C_c^1(B^N_{\eps_{i_2}}(x_i)\sm \{x_i\})$ with $\dl^2_M(X_2)< 0$ by assumption. The same argument produces $0<\eps_{i_3}<\eps_{i_2}$, $X_3 \in C_c^1(B^N_{\eps_{i_3}}(x_i)\sm \{x_i\})$ with $\dl^2_M(X_3)< 0$ and $spt(X_2)\In B^N_{\eps_{i_2}}(x_i)\sm B^N_{\eps_{i_3} }(x_i)$. In this way we can construct infinitely many $X_s$ with disjoint supports and for which $\dl^2_M(X_s)< 0$ for all $s$. Since $M_k\to M$ in the sense of varifolds, given any $S>I$ we can find $k$ sufficiently large for which $\dl^2_{M_k}(X_s) <0$ for all $s\leq S$. Each $X_s$ restricts to each $M_k$ to produce $S$ non-zero and mutually orthogonal vector fields contributing to the index of $M_k$ and we have our contradiction.

Therefore for all $x_i\in sing(M)$, $M\sm \{x_i\}$ is stable locally about each $x_i$ so
$$\h^{n-2}(sing(M))\twopartdef{=0}{n\geq 3}{<\infty}{n=2.}$$
By the regularity results of Schoen-Simon c.f. Theorem \ref{ssreg} and Remark \ref{extss} we end up with the desired regularity of $M$.  

We now investigate what kind of convergence we have over $M$. By Lemma \ref{nosing} (and arguing exactly as in Claim 1) we know that there is a finite set $\mathcal{Y}=\{y_i\}_{i=1}^K\In M$ such that the convergence is smooth and graphical over $x\in M\sm \mathcal{Y}$ and $K\leq I$. Now for any open $\Om \cemb M\sm \mathcal{Y}$, there exists a finite cover of $\Om$ and local graphs over each element in the cover. Letting $k$ be sufficiently large (so that the Hausdorff distance of $M_k$ to $M$ is sufficiently small) we can consider the part of $M_k$ over $\Om$ to be within a normal tubular neighbourhood of $\Om$, therefore there exist layered graphs from $\Om$ parametrising $M_k$ and defined on these components. Note that the number of leaves is eventually a constant integer since $M$ is smooth and connected (and therefore $M$ with a finite number of small discs removed is also connected).

\paragraph{Claim 3: $index(M)\leq I$.}
Assume for a contradiction that $index(M)>I$, i.e. there exist $I+1$ section $\{s_i\}_{i=1}^{I+1}\In \Gamma({\rm{N}}M)$ which are $L^2$ - orthonormal and 
$$Q(s_i,s_i) = -\lambda_i \,\,\,\,\,\,\,\,\, \lambda_i>0.$$ 
Now let $X_i$ be an arbitrary $C^1$ extension of $s_i$ to the rest of $N$. We know that eventually we must have 
$\dl^2_{M_k}(X_i) < 0$ for all $i$, therefore the sections $s_i^k = X_i^{\bot_{k}}$ ($X_i$ projected onto the normal bundle of ${\rm{N}}M_k\In TN$) must be linearly dependant for all $k$ - otherwise $index(M_k) > I$.  
 
Thus there exist $\mu_1^k,\dots,\mu_{I+1}^k \in \R$ not all zero such that $\mu_1^k s_1^k + \dots + \mu_{I+1}^k s_{I+1}^k = 0.$ 
We can take a subsequence and re-order so that $\max\{|\mu_i^k|\} = |\mu_{I+1}^k|$, then dividing through by $-\mu_{I+1}^k$ and re-labelling we can assume that  
$s_{I+1}^k = \mu_1^k s_1^k + \dots + \mu_I^k s_I^k$ with all of the $|\mu_i^k|\leq 1$. 
Now, we know that 
\begin{equation}\label{proj}
\int_{M_k} |X_i^{\top_{k}}|^2 \id \h^n \to 0,\,\,\,\,\,\,\text{as $k\to \infty$},
\end{equation}
which follows from the monotonicity formula ($X_i^{\top_{k}}$ is the projection of $X$ onto $TM_k$): For all small $\eps>0$ note that over $$M\sm (\cup_{y_i\in \mathcal{Y}}B^N_{\eps}(y_i))$$ the convergence is smooth and graphical thus 
$$\int_{M_k \sm (\cup_{y_i\in \mathcal{Y}}B^N_{\eps}(y_i)) } |X_i^{\top_{k}}|^2 \id \h^n \to 0,\,\,\,\,\,\,\text{as $k\to \infty$}.$$
 By the monotonicity formula (see e.g. \cite[Remark 17.9 (3)]{S83})
 $$\int_{M_k \cap B^N_{\eps}(y_i)}  |X_i^{\top_{k}}|^2 \id \h^n \leq C \sup|X_i|^2 \eps^n$$
 where $C=C(\h^n(M_k))$, and \eqref{proj} follows. 

Therefore we also have that 
\begin{eqnarray*}
\lim_{k\to \infty}\int_{M_k} \la s_i^k, s_j^k\ra  \id \h^n&=&\lim_{k\to \infty}\left( \int_{M_k} \la X_i ,X_j \ra  - \la X_i^{\top_{k}}, X_j^{\top_{k}} \ra \id \h^n\right)\\
& =& \int_M \la s_i ,s_j\ra \id \h^n = \dl_{ij}.
\end{eqnarray*}
Hence for $i<I+1$ 
\begin{eqnarray*}
0 &=& \lim_{k\to \infty} \int_{M_k} \la s_{I+1}, s_i^k\ra  \id \h^n \\
&=& \lim_{k\to \infty} \int_{M_k} \la\mu_1^k s_1^k + \dots + \mu_I^k s_I^k,  s_i^k\ra \id \h^n  \\
&=& \lim_{k\to \infty} \mu_i^k
\end{eqnarray*} implying that $\mu_i^k \to 0$ for all $i$. 
But then it is easy to see that this implies $\lim_{k\to \infty}\int_{M_k} |s_{I+1}^k|^2 = 0$ which is a contradiction and the claim follows.

\paragraph{Claim 4: If the number of leaves in the convergence is eventually one then the convergence is smooth and graphical over all of $M$.} Initially all we know is that this is the case over $M\sm\mathcal{Y}$, but the full statement will follow from Allard's regularity theorem. Let $\eps>0$ and suppose the convergence is not graphical over some point $x\in M$. We know we can pick $r(x,\eps)>0$ sufficiently small such that (we may denote $\|M\|(U) = \int_{U\cap M} \id \h^n$ since $M$ has multiplicity one)
$$\fr{\|M\|(B^N_{r}(x))}{r^n} \leq 1 +\eps.$$
Now by varifold convergence we know that (since $r$ is sufficiently small and $\|M\|(\de B^N_r(x)) = 0$)
$$\|M_k\|(B^N_{r}(x)) \to \|M\|(B^N_{r}(x)) \leq (1 + \eps)r^n$$thus for all sufficiently large $k$ 
$$\|M_k\|(B^N_{r}(x)) \leq (1+2\eps)r^n.$$
Now pick $r>\eta=\eta(\eps, r,n)>0$ to be specified later (again by Hausdorff convergence) we get that for all sufficiently large $k$, and any $y_{k} \in M_k\cap B^N_\eta(x)$ 
$$\|M_k\|(B^N_{r-\eta}(y_k)) \leq \|M_k\|(B^N_r(x)) \leq  (1+2\eps)r^n = (1+2\eps)(r-\eta+\eta)^n \leq (1+3\eps)(r-\eta)^n$$
where now we have picked $\eta$ sufficiently small so that the last inequality holds true. 

Since $\eps$ was arbitrary, we can apply Allard's regularity theorem and conclude (in particular) that $M_k\cap B_{\eta}(x)$ is smooth for all $k$ with uniform $L^{\infty}$ control on the second fundamental form. Thus the convergence must be graphical everywhere over $M$ and we have proved the claim. 

If $N$ has positive Ricci curvature and $M$ is one-sided we lift everything to the universal cover $\pi: \ti{N}\to N$. By Frankel's theorem \cite[Generalised Hadamard Theorem]{F66}, since $Ric_{\ti{N}} >0$, the lifts $\ti{M}_k$ and $\ti{M}$ are all connected (if not, any two components must intersect which is a contradiction). Also, $\ti{N}$ is simply connected so we must have that $\ti{M}_k$ and $\ti{M}$ are orientable (and thus two-sided) embedded minimal hypersurfaces. Furthermore, we still have smooth graphical convergence of the $\ti{M}_k$  to $\ti{M}$ away from $\ti{\mathcal{Y}} = \pi^{-1}\mathcal{Y}$, and the number of sheets is again $\geq 2$. Thus we reduce to the below: we will prove that $\ti{M}$ is a stable, two-sided minimal surface in $\ti{N}$ which contradicts the positivity of the Ricci curvature.

The rest of the proof now concerns the situation where $M$ is two-sided and the graphical convergence is a single leaf or the graphical convergence has more than one leaf. 

We now follow the ideas developed in \cite{S87}, see also \cite{CM98}. 
If the number of leaves is bigger than one, or there is a single leaf always lying on one side of $M$ then we now prove that  $index(M) = 0$, i.e. $M$ is stable. Roughly speaking, if a sequence of minimal hypersurfaces converge smoothly and graphically to some fixed minimal surface, then eventually one should see a smooth variation of the limit through minimal surfaces - i.e. there must be a solution to the Jacobi equation. This is given explicitly by suitably re-normalising the geodesic distance between the limit and the approaching surfaces as in \cite{S87}. If a single sheet converges totally on one side then this is always positive, moreover if there are many sheets, then over most of $M$ we can construct a positive solution (the signed distance between the top and bottom sheet). In either case, we end up with a signed solution to the Jacobi operator. Once we know there is a strictly positive solution then this must correspond to the lowest eigenvalue and there can be no index (by standard minimax methods). 

In the case that the convergence is single sheeted and on both sides (i.e. $M_k\cap M\neq\emptyset$ - we will assume that $M_k\neq M$ otherwise the conclusion is trivial), the procedure gives a solution to the Jacobi equation which is neither strictly positive or negative, thus the first eigenvalue is negative by standard minimax arguments.

We give the details in the case that the number of sheets is $\geq 2$, and leave the case of single sheet convergence - ``one-sided convergence" and ``two-sided convergence" - mostly to the reader since it follows easily from the below. 

\paragraph{Claim 5: If the number of sheets is $\geq 2$ and $M$ is two-sided then there exists a smooth positive solution the the Jacobi equation over $M\sm\mathcal{Y}$.}
Given any compact domain $\Om\cemb M\sm \mathcal{Y}$ we know that we can find $\dl>0$ and $k$ sufficiently large such that there is some set of functions $\{u_k^1< u_k^2<\dots< u_k^L\}\in C^{\infty}(\Om)$, $L>1$ such that 
$$M_k\cap \Om_{\dl}=\{Exp_x(\nu(x) u_k^1(x)),\dots , Exp_x(\nu(x)u_k^L(x))\}
$$where $\Om_{\dl}$ is a $\dl$-normal neighbourhood of $\Om$. Now consider the following path of smooth hypersurfaces in $\Om_\dl$, for $v_k(x,t):=tu^L_k(x)+(1-t)u^1_k(x)$ given by 
$$\Sigma_k(t):=\{Exp_x(\nu(x)v_k(x,t)):x\in \Om\}$$ and notice that any compactly supported ambient vector field $Z\in C^1_c(\Om_\dl)$, gives rise to variations of $\Sigma_k(t)$, denoted 
\begin{eqnarray*}
\Sigma_k(t,s)&=&\{\Psi^k_{t,s}(x)=\psi_s(Exp_x(\nu(x)v_k(x,t))): x\in \Om\}\\
&=&(\psi_s)_\sharp(\Sigma_k(t)). 
\end{eqnarray*}
where $\psi_s$ is a family of diffeomorphisms induced by $Z$ i.e. $Z(x)=\pl{\psi_s(x)}{s}\vlinesub{s=0}$. We have 
$$\pl{}{s}\vlinesub{s=0}Vol(\Sigma_k(t,s)) = \int_{\Sigma_k(t)} div_{\Sigma_k(t)}(Z)\id\h^n$$
and this is a smooth function of $t$ by the definition of $\Sigma_k(t)$. We also know that this quantity is null when $t=0,1$ for all $k$, thus there exists some $t_k\in (0,1)$ such that\footnote{by following the computations in \cite[Section 9]{S83} except we have considered the two parameter variation of $\Om$. Once again we have considered $\Om\In N\cemb \R^N$ and restricted to variations which are tangent to $N$}
\begin{eqnarray}
0 &=& \pl{}{t}\vlinesub{t=t_k}\pl{}{s}\vlinesub{s=0}Vol(\Sigma_k(t,s))\nonumber\\
&=&\int_{\Sigma_k(t_k)}div_{\Sigma_k(t_k)}(V_k) +(div_{\Sigma_k(t_k)}(X_k))(div_{\Sigma_k(t_k)}(Z)) +\nonumber\\
&&+ \sum_{i=1}^n  (\D_{\tau_{k,i}} X_k)^{\bot} \cdot(\D_{\tau_{k,i}} Z)^{\bot} -\sum_{i,j=1}^n (\tau_{k,i}\cdot \D_{\tau_{k,j}} X_k)(\tau_{k,j}\cdot \D_{\tau_{k,i}} Z) \id \h^n \label{2ndv}
\end{eqnarray}
where $$X_k(\Psi^k_{t,0}(x))=\pl{\Psi^k_{t,s}(x)}{t}\vlinesub{s=0},\,\,\,\,Z(\Psi^k_{t,0}(x))=\pl{\Psi^k_{t,s}(x)}{s}\vlinesub{s=0}\,\,\,\,\text{and}\,\,\,\, V_k(\Psi^k_{t,0}(x))=\ppl{\Psi^k_{t,s}(x)}{t}{s}\vlinesub{s=0}$$
and $\{\tau_{k,i}\}$ is some orthonormal basis for $\Sigma_k(t)$ at any given point. We could have also written 
$$X_k(y)=\ed_{\nu(x)v_k(x,t)} Exp_x (\nu(x)(u^L_k(x)-u^1_k(x)))\,\,\,\,\,\,\text{where}\,\,\,\,\,\,\, y=Exp_x(\nu(x)v_k(x,t_k))$$ and $V_k=\D_{X_k} Z$.  

We also know that $v_k(x,t)\to 0$ uniformly and smoothly in $x$ as $k\to \infty$ (since $u^l_k \to 0$ uniformly and smoothly see also \cite{CM98}, \cite{S87}), for any $t\in [0,1], l\in [1,L]$ and thus $\Sigma_{k}(t)$ converges smoothly and graphically to $\Om$. Moreover $Z, V_k$ are compactly supported on each $\Sigma_k(t)$. Thus, restricting to vector fields $Z$ that are normal along $\Om$ (i.e. are written as $\eta(x)\nu(x)$ for some $\eta\in C_c^{\infty}(\Om)$), standard integration by parts and submanifold formulae lead to: that $\ti{h}_k(x)=u^L_k(x) - u^1_{k}(x)$ solves, for all $\eta\in C_c^{\infty}(\Om)$,
\begin{equation}\label{approxj}
0=\int_{\Om} \D \ti{h}_k \cdot \D \eta - (|A|^2 + Ric(\nu,\nu) )\ti{h}_k \eta  + \left(div_{\Om} (a_k \D \ti{h}_k) + b_k \cdot \D \ti{h}_k  + c_k \ti{h}_k\right)\eta
\id V_{\Om}\end{equation}
where $a_k, b_k ,c_k$ go to zero smoothly and uniformly on $\Om$ - see \cite[p. 333]{S87}\footnote{to see this notice that if \eqref{2ndv} holds on $\Om$ then we recover \eqref{approxj} with $a_k, b_k, c_k$ all zero - but as $k\to \infty$ we are converging smoothly to this situation where $\Sigma_k(t_k)$ converges to $\Om$ smoothly and graphically}. Now consider $\Om^\prime \cemb \Om$ and re-normalise $h_k(x):= \ti{h}_k(y_0)^{-1} \ti{h}_k(x)$ for some fixed $y_0\in \Om^\prime$, a Harnack estimate\footnote{since $\ti{h}_k$ is a positive solution to a sequence of uniformly elliptic equations with smooth coefficients, once the coefficients $a_k,b_k,c_k$ are sufficiently small - \cite[Corollary 8.21]{gt} gives an $L^{\infty}$ estimate and then \cite[Theorem 9.11]{gt} gives a $C^{1,\al}$ estimate (by Sobolev embedding), but then a simple bootstrapping argument gives smoothness} gives smooth control over compact subsets $\Om^{\prime\prime} \cemb\Om^\prime$. Thus we can conclude that (since $h_k>0$) we converge locally and smoothly to a non-trivial solution $h :M\sm\{y_i\}\to 
\R_{\geq 0}$ of 
\begin{equation}\label{exact}
-\Dl_{M} h - (|A|^2 + Ric(\nu,\nu))h = 0
\end{equation}
and for any $\Om\cemb M\sm\mathcal{Y}$ we have smooth estimates for $h$. Moreover the maximum principle tells us that $h>0$ on $M\sm\mathcal{Y}$.

We point out here that when the number of sheets is one then we follow the above ideas, except that we set $v_k(x,t) = t u_k(x)$ which is defined over all of $M$. Thus $\ti{h}_k(x) = u_k(x)$ solves \eqref{approxj} over all of $M$ and we can re-normalise $h_k(x):= \|\ti{h}_k\|_{L^2(M)}^{-1}\ti{h}_k(x)$. Then standard elliptic estimates\footnote{similarly to the above \cite[Theorem 9.11]{gt} gives a $W^{2,2}$ estimate, but then a simple bootstrapping argument gives smoothness} give smooth control and convergence of $h_k$ to a non-trivial limit $h$ satisfying \eqref{exact} on $M$. In the case that we have one-sided convergence then $h_k >0$ which is preserved in the limit (by the maximum principle and that $\|h_k\|_{L^2} =1$), thus we have a signed solution to the Jacobi operator $h$ and the limit must be stable (by minimax methods). In the case that we have two-sided convergence then $h$ cannot be either strictly positive or negative, therefore there must exist at least one negative eigenvalue by minimax methods.  

\paragraph{Claim 6: The solution $h$ to \eqref{exact} extends to a smooth positive solution to the Jacobi equation over all of $M$.}

To see this it suffices to check that $h$ is bounded over the $y_i\in \mathcal{Y}$ and thus\footnote{it is therefore a weak, signed solution over the whole of $M$ with a global bound, yielding smooth estimates. The maximum principle tells us that it must be strictly positive over all of $M$.} that we have a strictly positive solution to the Jacobi equation over all of $M$. In other words, $M$ must be stable by a standard minimax argument.  

We use an argument which can also be found in \cite{CM98}:  let $y_i\in \mathcal{Y}$ and for $\eta$ sufficiently small consider $\{z^1,\dots , z^n\}$ geodesic normal coordinates on $B^M_{2\eta}(y_i)$. Extend these to exponential normal coordinates $\{z^1,\dots, z^{n+1}\}$ on some small neighbourhood $U$ in $N$ about $y_i$. In $z$ coordinates consider a cylindrical neighbourhood $C_0=B_{\eta}^n\times (-c_0,c_0)$ in $\R^{n+1}$ for $c_0$ small. Given $\eps>0$ we can find $k$ sufficiently large so that the graphs $\|u^i_k\|_{C^{2,\al}(\de B^n_{\eta})} <\eps$, moreover letting $x=(z^1,\dots,z^n)$ we can extend each $u^i_k$ to the interior of $B^n_{\eta}$ by $w^i_k(x) = |x|^2u^i_k(\fr{x}{|x|})$. We can thus ensure that $\|w^i_k\|_{C^{2,\al}(B^n_{\eta})}\leq K_0\|u^i_k\|_{C^{2,\al}(\de B^n_{\eta})} <K_0\eps$. 
By the proposition in the appendix of \cite{W87}, setting $\eta$, $c_0$ and then $\eps$ sufficiently small (i.e. $k$ sufficiently large) we can foliate $C_0$ by minimal graphs (with respect to $N$) $v^i_{k,t}$ over $B_{\eta}^n$ such that 
$$\text{$v^i_{k,t}(x) = t + w^i_k(x) = t + u^i_k(x)$ for $x\in\de B_{\eta}^n$, $t\in [-c_0,c_0]$}$$ and 
$$\|v^i_{k,0}\|_{C^{2,\al}(B^n_\eta)} \leq K_1 \|u^i_k\|_{C^{2,\al}(\de B^n_\eta)}$$where the latter follows easily from the proof in \cite{W87} and the fact that $\|w^i_k\|_{C^{2,\al}(B^n_{\eta})}\leq K_0\|u^i_k\|_{C^{2,\al}(\de B^n_{\eta})}$ ($K_1$ is some uniform constant independent of $k$). We note that the result of White is only proved when $n=2$ but it follows trivially in higher dimensions - see \cite[Remark 2]{W87}. 

Setting $v^i_k = v^i_{k,0}$ we know that the difference $V_k:=v^L_k - v^1_k$ solves some uniformly elliptic differential equation (see \cite[pp. 237--238]{CM11} in the case that $n=2$)
$$L_k V_k = a^k_{ij} (V_k)_{x^ix^j} + b^k_i (V_k)_{x^i}= c_k V_k.$$ Letting $g_{ij}$ be the metric components of $N$ in $z$-coordinates, we have that the coefficients $a^k_{ij}, b^k_i, c_k$ are all uniformly controlled by $\{\|u^i_k\|_{C^{2,\al}(\de B^n_{\eta})}\}_{i=1,L}, g_{ij}, \pl{g_{ij}}{z^l}, \ppl{g_{ij}}{z^l}{z^q}$. Moreover when $\{\|u^i_k\|_{C^{2,\al}(\de B^n_{\eta})}\}_{i=1,L}$ are sufficiently small (i.e. $k$ sufficiently large), then $a^k_{ij}$ is uniformly elliptic, independently of $k$. Thus there is some $K_2>0$ such that $|a^k_{ij}|+|b_i^k| + |c_k|\leq K_2$ and $a^k_{ij}\xi^i\xi^j \geq \fr{1}{K_2}|\xi|^2$ for all $\xi\in \R^n$. By the weak maximum principle, in particular \cite[Theorem 3.7]{gt}
\begin{equation*}
\sup_{B_{\eta}^n} V_k  \leq \sup_{\de B_{\eta}^n} V_k + K_3\sup_{B_{\eta}^n} V_k\end{equation*}
where for $\eta$ sufficiently small, we can assume that $K_2\leq \fr12$. 
Therefore, when $\eta$ is sufficiently small
$$\sup_{B_{\eta}^n} V_k\leq 2 \sup_{\de B_{\eta}^n}  V_k = 2\sup _{\de B_{\eta}^n}  (u^L_k - u^1_k).$$
For $k$ sufficiently large let $M^L_k$ be the connected component of $M\cap C_0$ such that $u^L_k(x)\in M_k^L$ for some $x\in \de B^n_{\eta}$. Similarly, let $M^1_k$ be the connected component in $M\cap C_0$ corresponding to $u^1_k$.

We note by the maximum principle that $v^L_{k,t}(B^n_\eta)\cap M^L_k = \emptyset$ when $t>0$ and $v^1_{k,t}(B^n_\eta)\cap M^1_k =\emptyset$ when $t<0$. Therefore letting $D_k\In B^n_{\eta}$ denote the domain of definition of $\ti{h}_k = u^L_k - u^1_k$ we can conclude that  
$$\sup_{D_k} \ti{h}_k(x) \leq \sup_{B_\eta ^n} (v^L_k -  v^1_k) \leq 2\sup_{\de B_{\eta}^n} (u^L_k(x)- u^1_k(x))= 2\sup_{\de B_{\eta}^n} \ti{h}_k(x).$$
Thus we have that $h$ is bounded over $y_i$ and we have proved claim 6.

\end{proof}

\appendix

\section{The results of Schoen-Simon}

In \cite{ScS81} they consider functionals $F$ on balls in $\R^{n+1}$ which correspond to the volume functional of hypersurfaces in small normal coordinate balls about points in $N$. In other words, when considering minimal hypersurfaces $M\in IV_n(N)$, if we take a normal coordinate ball about some point $B^N_{\rho_0}(p)\In N$ and let $\ti{M}\In B^{n+1}_{\rho_0}(0)$ be $M\cap B^N_{\rho_0}(p)$ then they consider the functional $F$ such that 
$$Vol(M\cap B^N_{\rho_0}(p) ) = \int_{\ti{M}} F(y,\nu(y)) \id \h^n(y)$$where $\nu$ is the unit normal on $\ti{M}$ in $\R^n$. We will obviously not distinguish between $M\cap B^N_{\rho_0}(p)$ and $\ti{M}$ below, moreover we remark that if $M\cap B^N_{\rho_0}(p)$ is stable then $\ti{M}$ is stable with respect to the functional $F$ where we test against variations that are compactly supported in $B^N_{\rho_0}(p)$. See \cite{ScS81} for further details, in particular for a precise definition of the functionals $F$ under consideration there. We briefly list the necessary properties of $F$ following \cite[p. 743]{ScS81}; we assume it to be a $C^3$ function $B^{n+1}_{\rho_0}(0)\times (\R^{n+1}\sm\{0\})\to \R$  
\begin{equation}\label{1}
F(y,\lambda Z) = \lambda F(y,Z) \,\,\,\,\,\text{for all $\lambda>0$ and $(y,Z)\in B^{n+1}_{\fr{\rho_0}{2}}(0)\times (\R^{n+1}\sm\{0\})$,}
\end{equation}
and there exist $\mu$, $\mu_1$ such that
\begin{equation}\label{2}
\mu^{-1}\leq F(y,\nu)\leq \mu, \,\,\,\,\,\,\,|\D_2^\al F(y,\nu)|\leq \mu \,\,\,\,\,\,\text{for $(y,\nu)\in B^{n+1}_{\fr{\rho_0}{2}}(0)\times S^n$, $|\al|\leq 3$,}
\end{equation}
\begin{equation}\label{3}
|\D_1^\al\D_2^\beta F(y,\nu)|\leq \mu_1^{|\al|} \,\,\,\,\,\,\text{for $(y,\nu)\in B^{n+1}_{\fr{\rho_0}{2}}(0)\times S^n$, $|\al|+|\beta|\leq 3$, $0<|\al|\leq 2$}
\end{equation}
where $\al,\beta$ are multi-indices and $\D_1, \D_2$ denote differentiation with respect to the first or second component of $F$. Furthermore $F$ is the area integrand when $y=0$, i.e. 
\begin{equation}\label{4}
F(0,Z)=|Z|.
\end{equation}
Finally, for all $w\in B^{n+1}_{\fr{\rho_0}{2}}(0)$ there is a $C^3$ diffeomorphism $\psi_w:B^{n+1}_{\rho_0}(0)\to B^{n+1}_{\rho_0}(0)$ with $\psi_w(0)=w$ and 
\begin{eqnarray}\label{5}
&\sup_{B^{n+1}_{\rho_0}(0)} |\D \psi_w | +|\D \psi_w^{-1}| \leq \mu,\,\,\,\,\,\,\sup_{y\in B^{n+1}_{\rho_0}(0)} |\D^\al \psi_w |\leq \mu_1^{\al}\,\,\,\,|\al|=2,3,& \nonumber\\
&\psi_{w}^{\sharp}F(y,Z),\,\,\,\,\,\,\text{satisfies \eqref{1}-\eqref{4} above with the same $\mu, \mu_1$},&
\end{eqnarray}
where $\psi_{w}^{\sharp}F(y,Z)=F(\psi_w(y),((\ed_y \psi_w)^{-1})^{\ast}(Z))$,  $\ed_y \psi_w$ is the derivative at $y$ and $\ast$ denotes the adjoint.

\begin{theorem}\cite[Theorem 1]{ScS81}\label{ssgraph}
Suppose $F$ is a functional on $B^{n+1}_{\rho_0}(0)$ satisfying \eqref{1}-\eqref{5}, and $M$ be an orientable $C^2$ embedded hypersurface in $B^{n+1}_{\rho_0}(0)$ which is $F$-stable and satisfy $\h^n(M)\leq \mu \rho_0^n$ and $\h^{n-2}(sing(M))=0$.   
There exists a number $\dl_0 = \dl_0(n, \mu, \mu_1, \mu_1\rho_0)\in (0,1)$ such that if $x=(\ti{x}\in \R^n, x_{n+1})\in spt(M)\cap B^{n+1}_{\rho}$, $\rho\in (0,\fr{\rho_0}{4})$, $M^\prime$ is the connected component of $M$ in $C(\ti{x},\rho) = B^n_{\rho}(\ti{x})\times \R$ and
$$\sup_{y=(\ti{y}\in\R^n,y_{n+1})\in M^{\prime}} |y_{n+1} - x_{n+1}| \leq \dl_0\rho, \,\,\,\,\,\,\mu_1\rho\leq \dl_0,$$
then $M^\prime \cap C(\ti{x},\fr{\rho}{2})$ consists of a disjoint union of graphs of functions $u_1<u_2<\dots<u_L$ defined on $B^n_{\fr{\rho}{2}}(\ti{x})$ satisfying 
$$\max_i \sup_{B^n_{\fr{\rho}{2}}(\ti{x})}(|\D u_i| + \rho |\D^2 u_i|)\leq C\dl_0$$ where $C=C(n,\mu,\mu_1\rho)<\infty$.
\end{theorem}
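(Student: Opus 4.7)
The plan is to follow the strategy of Schoen–Simon: convert the smallness of the Hausdorff height excess $\delta_0\rho$ into smallness of the \emph{tilt excess} $\int |\nu - e_{n+1}|^2$, then iterate a tilt-decay lemma until the tilt is small enough at every scale to produce an honest $C^{1,\alpha}$ multi-graph. The $F$-stability inequality is what drives every quantitative step. Since $F(0,Z) = |Z|$, the integrand $F$ is a small $C^3$ perturbation of the area integrand on $B^{n+1}_{\rho_0/2}$, with errors controlled by $\mu_1 \rho$, and after rescaling to unit scale one works on $C(0,1)$ with $F$-area bounded by $\mu$ and perturbation size $\leq \mu_1\rho \leq \delta_0$.

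First I would use \eqref{1}–\eqref{5} together with the $F$-stationarity to derive the standard almost-monotonicity of $F$-density and the Caccioppoli inequality: choosing test vector fields that rotate the tangent plane toward a fixed direction, together with the smallness of $\mu_1\rho$, gives
\[
\int_{M'\cap C(\tilde x,3\rho/4)} |\nu - e_{n+1}|^2 \,d\mathcal{H}^n \leq C(n,\mu)\bigl(\rho^{-2}\int_{M'\cap C(\tilde x,\rho)} (x_{n+1}-x_{n+1}^0)^2 \,d\mathcal{H}^n + \mu_1^2 \rho^{n+2}\bigr) \leq C\delta_0^2 \rho^n.
\]
Next I would invoke $F$-stability against a cutoff supported away from $sing(M)$; the assumption $\mathcal{H}^{n-2}(sing(M))=0$ guarantees that the singular set can be cut out with test functions whose Dirichlet energy is arbitrarily small (the standard logarithmic cutoff lemma), so no singular contribution arises. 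Combined with a perturbed Simons identity for the $F$-second fundamental form one obtains the interior $L^2$ estimate
\[
\int_{M'\cap C(\tilde x,5\rho/8)} |A|^2\,d\mathcal{H}^n \leq C(n,\mu)\bigl(\rho^{-2}\mathcal{H}^n(M'\cap C(\tilde x,3\rho/4)) + \mu_1^2\rho^n\bigr).
\]

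With these two ingredients I would set up the tilt-excess decay lemma: there exist $\theta\in(0,1/2)$ and $\eta_0>0$ depending only on $n,\mu,\mu_1\rho$ such that whenever the tilt excess on $C(\tilde y,r)$ is $\leq \eta_0$ and the height excess is $\leq \eta_0 r$, the tilt excess on $C(\tilde y,\theta r)$ decays by a definite factor, plus an error $O(\mu_1 r)$. This is proved by the usual blow-up argument: rescaling a hypothetical counter-sequence, the blow-ups are $F$-stable limits which, by the Allard-type first estimate and the $L^2$ second-fundamental-form bound, converge to a solution of the linearised minimal-surface system on the flat cylinder, for which harmonic approximation forces the required decay. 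The combination of height smallness $\delta_0\rho$ and the Caccioppoli estimate above lets us enter this lemma initially, and iterating it from scale $\rho/2$ downward yields uniform $C^{1,\alpha}$ control over the multi-valued Lipschitz approximation. Finally, since $M'$ is embedded, the Lipschitz sheets that appear cannot cross, so they order themselves as $u_1<u_2<\dots<u_L$ on $B^n_{\rho/2}(\tilde x)$; the proof that no "vertical" piece of $M'$ is missed uses the Hausdorff height bound $|y_{n+1}-x_{n+1}|\leq \delta_0\rho$ together with the area bound $\mathcal{H}^n(M) \leq \mu \rho_0^n$ to rule out any component projecting off $B^n_{\rho/2}(\tilde x)$, after possibly shrinking $\delta_0$.

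The main obstacle is the tilt-decay lemma, specifically justifying the blow-up limit. The $L^2$-bound on $|A|$ is only available because stability can be tested across $sing(M)$, and passing to the blow-up one must show the limit is a \emph{smooth} solution of the linearised equation — this is where the dimension restriction does \emph{not} yet enter (that is reserved for the later regularity theorem), but one does need the Allard-type estimate of Theorem~\ref{ssreg}'s forerunners to ensure that, in the limit, the density-$1$ piece is a graph one can linearise. Modulo this, the choice $\delta_0=\delta_0(n,\mu,\mu_1,\mu_1\rho_0)$ is fixed so that (i) a single application of tilt-decay may be made, (ii) the iteration geometric-sums to a total $C^{1,\alpha}$ estimate of size $C\delta_0$, and (iii) the error term $\mu_1\rho\leq \delta_0$ stays absorbed; this produces the stated estimate $\sup(|\nabla u_i|+\rho|\nabla^2 u_i|)\leq C\delta_0$.
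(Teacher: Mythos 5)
This theorem is not proved in the paper at all: it is quoted verbatim from Schoen--Simon \cite[Theorem 1]{ScS81}, and the only thing the paper itself establishes in the appendix is Lemma~\ref{lemma}, which verifies that the stability inequality (the input to the sheeting theorem) survives the weakened hypothesis $\h^0(sing(M))<\infty$ when $n=2$. So there is no "paper's own proof'' to compare your attempt against; the paper's proof is a citation. Your task was therefore effectively to reconstruct the argument of the Schoen--Simon paper itself, which is a substantial undertaking.

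Taken on those terms, your sketch captures the correct skeleton: (i) a Caccioppoli-type estimate converting smallness of the height excess into smallness of the tilt excess; (ii) the stability inequality, tested across $sing(M)$ via a logarithmic cutoff (valid because $\h^{n-2}(sing(M))=0$), yielding an interior $L^2$ bound on $|A|$; (iii) an excess-decay lemma proved by blow-up and harmonic approximation, iterated to produce $C^{1,\alpha}$ multi-graphs; (iv) embeddedness to order the sheets. You also correctly place the dimension restriction $n\le 6$: it does not enter here (Theorem~1 of Schoen--Simon holds for all $n$) but only in the pointwise curvature estimate of Corollary~1/Theorem~3, via Simons' classification of stable cones. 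These are all genuine features of the Schoen--Simon argument.

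The honest caveats are these. First, your sketch is a plan, not a proof; the hard analysis is in the decay lemma, in the Lipschitz multi-valued approximation, and in showing that the blow-up limit decomposes into smooth graphs, and none of that is carried out. Second, the organisation of Schoen--Simon's own proof is somewhat different in emphasis: they do not run a clean Allard-style tilt-decay iteration but rather a sequence of quite delicate interlocking estimates built directly on the stability inequality, with the "sheeting'' conclusion extracted via a Lipschitz approximation and a separate oscillation estimate. Your reorganisation into a single decay lemma is closer in flavour to Wickramasekera's later treatment \cite{W14} than to the original. That is a defensible and arguably cleaner route, but you should be aware that what you have written is a modernised reframing of the proof, not a reconstruction of the original one, and that the technical content you have deferred to the "tilt-decay lemma'' is precisely where most of the fifty pages of \cite{ScS81} live.
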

\begin{remark}
This is clearly false if we remove the stability assumption by considering a blown down catenoid in a Euclidean ball and $F$ is exactly the area integrand of $\R^{n+1}$. The catenoid has index one and if we scale it down it eventually has index one in any ball about the origin, moreover it converges to a plane of multiplicity two - in particular we can make it satisfy all the the conditions above, except the stability condition - but it can never be a multi-valued graph at the origin. 

We also remark that the condition $\h^0(sing(M)) = 0$ when $n=2$ is sufficient for the above theorem to hold - we check this below and note that this is a trivial extension of the above result. As mentioned previously a result of Neshan Wickramasekera \cite{W14} would in particular allow one to assume only that $\h^{n-1}(sing(M))=0$ for all dimensions. 
\end{remark}

The following regularity (and compactness) theorem then holds: 
\begin{theorem}{\cite[Theorem 3]{ScS81}}\label{sschange}
Suppose $M$ is an orientable $C^2$ embedded hypersurfaces in $B^{n+1}_{\rho_0}(0)$ which is $F$-stationary and stable ($F$ as above). Moreover $\h^n(M)\leq \mu \rho_0^n$ and $\h^{n-2}(sing(M))=0$. Then $$\text{$\h^{\al}(sing(M)\cap B^{n+1}_{\fr{\rho_0}{2}}) = 0$ for all $\al >n-7$.} $$
Moreover when $n\leq 6$ there is a constant $c_1 = c_1(n,\mu,\mu_1\rho_0)$ such that 
$$\sup_{M\cap B_{\fr{\rho_0}{2}}} |A| \leq c_1 \rho_0^{-1} $$
where $A$ is the second fundamental form of $M$ in $\R^{n+1}$. 
\end{theorem}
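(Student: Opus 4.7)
The plan is to derive Theorem~\ref{sschange} from the graphical decomposition result Theorem~\ref{ssgraph} via a Federer-style dimension reduction combined with Simons' classification of stable minimal cones in $\R^{n+1}$. The starting point is the existence and structure of tangent cones: at each $x\in spt(M)$, monotonicity for $F$-stationary varifolds (valid because \eqref{1}--\eqref{5} make $F$ a smooth perturbation of the Euclidean area integrand that coincides with it at the origin) gives a well-defined density $\Theta(M,x)$, and the rescalings $r_i^{-1}(M-x)$ have subsequential limits which are stable stationary integral cones $C$ with respect to the Euclidean area on $\R^{n+1}$ satisfying $\h^{n-2}(sing(C))=0$. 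Stability transfers because the rescaled functionals $F_{x,r_i}$ converge smoothly to $|\cdot|$, and their second variations converge accordingly.

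For the Hausdorff-dimension claim I would argue by contradiction: if $\h^{\alpha}(sing(M)\cap B^{n+1}_{\rho_0/2})>0$ for some $\alpha>n-7$, then Federer's dimension reduction lemma, applied within the translation- and dilation-invariant class of stable area-stationary cones with $\h^{n-2}$-negligible singular set, produces an iterated tangent cone $C'\In \R^{k+1}$, $k\leq n$, that splits off $\R$-factors until its singular set becomes the origin alone, while preserving $\dim sing(C')>k-7$. By Simons' theorem, no such stable minimal cone with an isolated singularity exists for $k\leq 6$, giving the desired bound. The feedback from Theorem~\ref{ssgraph} is precisely that whenever a tangent cone at a point $x$ is a hyperplane (with any integer multiplicity), Hausdorff closeness of $M$ to this plane at small scales permits application of \ref{ssgraph}, so $M$ decomposes as smooth ordered multi-valued graphs near $x$ and $x\in reg(M)$; hence singular points of $M$ can only occur where the density is not realised by a flat tangent cone.

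For the pointwise curvature estimate when $n\leq 6$ I would use a standard blow-up and compactness argument. Suppose for contradiction that no such constant $c_1$ exists; then there is a sequence of admissible functionals $F_k$ and stable $F_k$-stationary hypersurfaces $M_k\In B^{n+1}_{\rho_0}(0)$ along which $|A_{M_k}(y)|\,\mathrm{dist}(y,\partial B^{n+1}_{\rho_0/2})$ blows up. Using the standard point-picking trick, select near-maximisers $x_k$ and rescale by $r_k=|A_{M_k}(x_k)|^{-1}\to 0$ to obtain $\tilde M_k$ which are stable with respect to rescaled functionals $\tilde F_k$ on arbitrarily large balls, with $|\tilde A_k|(0)=1$, locally bounded curvature, and uniform mass bounds from monotonicity. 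Since $\tilde F_k\to |\cdot|$ smoothly (again by \eqref{4}--\eqref{5}), smooth compactness for bounded-curvature stable minimal hypersurfaces yields a subsequential limit $M_\infty\In\R^{n+1}$ which is complete, stable and minimal with $|A_{M_\infty}|(0)=1$. But the just-proved Hausdorff-dimension bound, applied to any tangent cone of $M_\infty$ at infinity and combined with Simons' theorem, forces that cone to be a hyperplane; by the monotonicity formula $M_\infty$ has density one at infinity, and the rigidity case of the monotonicity formula together with Theorem~\ref{ssgraph} forces $M_\infty$ itself to be a hyperplane, contradicting $|A_{M_\infty}|(0)=1$.

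The main obstacle I expect is the uniform transfer of both the stability inequality and the singular-set hypothesis $\h^{n-2}(sing(\cdot))=0$ through the various blow-up and blow-down limits. Propagating stability requires quantitative control on how smoothly $F_{x,r}\to|\cdot|$, which is encoded precisely in \eqref{4}--\eqref{5} but must be combined with a cut-off argument on the admissible test vector fields at each scale. Propagating the singular-set bound under varifold convergence is more delicate, and is in practice achieved via Theorem~\ref{ssgraph} itself: at any limit point modelled on a multiplicity-$m$ hyperplane the convergence is smooth and graphical, so singularities of the limit arise only from singularities (or coalescences of sheets) of the approximating sequence, keeping the $\h^{n-2}$-negligibility intact. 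A secondary subtlety is the orientability hypothesis, which must pass to tangent cones and blow-up limits; this is routine once the graphical structure on the regular part has been established.
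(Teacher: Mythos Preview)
This theorem is not proved in the present paper; it is quoted verbatim from Schoen--Simon \cite[Theorem 3]{ScS81}, and the only argument the paper offers is the one-sentence summary in the introduction that the result ``follow[s] by a suitably adapted dimension-reduction argument of Federer coupled with the results of Simons \cite{S68}.'' Your proposal is exactly this outline --- tangent cones plus Federer reduction plus Simons' classification for the singular-set bound, then a point-picking blow-up for the curvature estimate --- so there is nothing substantive to compare; you have reconstructed the intended strategy.

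One small correction: in your blow-up argument you assert that $M_\infty$ has ``density one at infinity.'' The tangent cone at infinity is a multiplicity-$m$ hyperplane for some integer $m\geq 1$, not necessarily $m=1$; what saves the argument is that Theorem~\ref{ssgraph} applied at large scales then writes $M_\infty$ as $m$ ordered smooth graphs over this plane, and letting the radius go to infinity in the gradient estimate forces each graph to be affine, so $M_\infty$ is a union of parallel hyperplanes and $|A_{M_\infty}|\equiv 0$ --- the same contradiction.
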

\begin{remark}
We can therefore conclude when $2\leq n\leq 6$ that if $F_q$ are a sequence of functionals all satisfying \eqref{1}-\eqref{5} uniformly, such that they converge in $C^3$ to some limit functional $F$; then any sequence $M_q$ of $F_q$-stable hypersurfaces, with uniformly bounded mass, converge locally in the $C^2$ topology to some $M$ which is itself an $F$-stable surface. Moreover if $F_q$ converges to $F$ in $C^k$ then $M_q$ converges to $M$ in $C^{k-1, \al}$. 

Once again the assumption that $\h^0(sing(M))<\infty$ is sufficient to conclude the above. 
\end{remark}

The following Lemma is essentially proven in \cite[Lemma 1]{ScS81} - we note that this is the only place we need to check for the theorems of Schoen-Simon to hold for surfaces with point singularities.
\begin{lemma}\label{lemma}
Let $M\in IV_2(B^{3}_{\rho_0}(0))$ be $F$-stable with $\h^{0}(sing(M))<\infty$. There exist $\eps_0=\eps_0(\mu,\mu_1\rho_0)>0$ and $C=C(\mu,\mu_1\rho_0)<\infty$, such that whenever $\mu_1\rho\leq \eps_0$ and $\phi$ is a bounded locally Lipschitz function vanishing in a neighbourhood of $M\cap \de C(0,\rho)$, we have 
$$\int_M |A|^2\phi^2 \id \h^2 \leq C\left(\int_M (1-[\nu\cdot \nu_0]^2)|\D \phi|^2 \id \h^2 + \mu_1^2 \int_M \phi^2 \id \h^2 \right)$$  
\end{lemma}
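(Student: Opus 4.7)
The plan is to reduce to the version already established in \cite[Lemma 1]{ScS81} (which requires the test function to vanish in a neighbourhood of $sing(M)$, i.e.\ applies under $\h^{n-2}(sing(M))=0$ and in particular when $n=2$ and there are no singularities) by approximating $\phi$ with $\phi\psi_\eps$, where $\psi_\eps$ is a logarithmic cutoff exploiting the vanishing $W^{1,2}$-capacity of a point in dimension two. Specifically, write $sing(M)=\{x_1,\ldots,x_N\}$ (finite by hypothesis) and for small $\eps>0$ set
$$\psi_\eps(x)=\prod_{i=1}^N \chi_{i,\eps}(x),\qquad \chi_{i,\eps}(x):=\min\!\left\{1,\max\!\left\{0,\frac{\log(|x-x_i|/\eps^2)}{\log(1/\eps)}\right\}\right\}.$$
Then $\psi_\eps$ is a bounded locally Lipschitz function vanishing in a neighbourhood of $sing(M)$, so $\phi\psi_\eps$ is an admissible test function for the inequality of \cite[Lemma 1]{ScS81}, yielding
$$\int_M |A|^2\phi^2\psi_\eps^2\id\h^2\le C\!\left(\int_M(1-[\nu\cdot\nu_0]^2)|\D(\phi\psi_\eps)|^2\id\h^2+\mu_1^2\int_M\phi^2\psi_\eps^2\id\h^2\right).$$

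Next I would use the product-rule bound $|\D(\phi\psi_\eps)|^2\le(1+\delta)\psi_\eps^2|\D\phi|^2+(1+\delta^{-1})\phi^2|\D\psi_\eps|^2$ for arbitrary $\delta>0$, and show that the $|\D\psi_\eps|^2$ term vanishes in the limit. The gradients $\D\chi_{i,\eps}$ have pairwise disjoint supports once $\eps$ is small, and the monotonicity formula (which gives $\h^2(M\cap B_r(x_i))\le Cr^2$) combined with a direct computation on the logarithmic profile yields
$$\int_M|\D\psi_\eps|^2\id\h^2\le\sum_{i=1}^N\frac{1}{(\log 1/\eps)^2}\int_{\{\eps^2\le|x-x_i|\le\eps\}\cap M}\frac{\id\h^2}{|x-x_i|^2}\le\frac{C(N)}{\log(1/\eps)}\longrightarrow 0.$$
Since $\phi$ is bounded, the contribution of the $|\D\psi_\eps|^2$ term to the right-hand side therefore vanishes as $\eps\to 0$.

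Finally, I would send $\eps\to 0$: on the left, $\psi_\eps^2\nearrow 1$ pointwise on $reg(M)$ (a set of full $\h^2$-measure in $M$), so monotone convergence gives $\int_M|A|^2\phi^2\psi_\eps^2\to\int_M|A|^2\phi^2$; on the right, dominated convergence (using $|\D\phi|\in L^\infty$ on $spt(\phi)$) gives convergence to $(1+\delta)\int_M(1-[\nu\cdot\nu_0]^2)|\D\phi|^2+\mu_1^2\int_M\phi^2$. Taking $\delta\to 0$ completes the proof. The only real obstacle is verifying the capacity estimate on $\int|\D\psi_\eps|^2$; this is the step that is specific to $n=2$, since in higher dimensions points have positive $W^{1,2}$-capacity and no analogous cutoff exists, which is precisely why the extension of Schoen-Simon from $\h^{n-2}(sing(M))=0$ to $\h^0(sing(M))<\infty$ is available only when $n=2$.
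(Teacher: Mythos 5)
Your proposal is correct and follows essentially the same strategy as the paper: cover the finite singular set by small balls, insert a logarithmic cutoff $\psi_\eps$ (your profile with $r_i=\eps^2$ is the same as the paper's annulus from $r_i$ to $\sqrt{r_i}$), apply the Schoen--Simon inequality to $\phi\psi_\eps$, split the gradient term via Young's inequality, and kill the $|\D\psi_\eps|^2$ contribution using the two-dimensional logarithmic capacity estimate driven by the monotonicity formula. The only place the paper is more explicit than you is in the estimate $\int_{\{\eps^2\le|x-x_i|\le\eps\}\cap M}|x-x_i|^{-2}\id\h^2\le C\log(1/\eps)$, which requires the dyadic decomposition of the annulus rather than a one-shot bound, but you have identified exactly the right quantity and the right ingredient.
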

for any $\nu_0\in S^2$. 
\begin{proof}

This follows by using a log cut-off argument. Notice that for any $\delta>0$ sufficiently small we can cover $sing(M)$ by $K=\h^0(sing(M))$ disjoint ambient balls $B_{r_i}(x_i)$ such that $\max_i r_i \leq \delta$ and also we will require that $B_{\sqrt{r_i}}(x_i)$ are disjoint. 

Now, it is proved in \cite{ScS81} that this Lemma is true for bounded locally Lipschitz $\phi$ supported on $reg(M)$.  

Now take any such $\phi$, not necessarily vanishing on $sing(M)$ and let 
$$\eta_{i}:=\threepartdef{0}{|x-x_i|<r_i}{\fr{\log\left(\fr{|x-x_i|}{r_i}\right)}{\log(\fr{1}{\sqrt{r_i}})}}{r_i\leq|x-x_i|<\sqrt{r_i}}{1}{|x-x_i|\geq\sqrt{r_i}}$$
where $|x-x_i|$ is the distance function for $N$ from $x_i$ - we can choose normal coordinates centred at $x_i$ in order to do this, and for $\dl$ sufficiently small. 

Now define $\psi = \phi \prod_{i=1}^K \eta_i$ which is admissible in the above, thus 
\begin{eqnarray*}
\int_M |A|^2\phi^2 \prod_{i=1}^K \eta_i^2 \id \h^2 &\leq& C\left(\int_M (1-[\nu\cdot \nu_0]^2)|\D \psi|^2 \id \h^2 + \mu_1^2 \int_M \phi^2\prod_{i=1}^K \eta_i^2 \id \h^2 \right)\\
 &\leq&  C\int_M \left((1-[\nu\cdot \nu_0]^2)(1+\mu)|\D \phi|^2 + \mu_1^2 \phi^2\right)\prod_{i=1}^K \eta_i^2\id \h^2  +\\
 &+& C(\mu) K \sup|\phi|^2  \sup_i \int_M |\D \eta_i|^2\id\h^2. 
\end{eqnarray*}
We have used Young's inequality with a ``$\mu$" and trivial estimates. 

Using this we have
\begin{eqnarray*}
\int_M |A|^2\phi^2\id \h^2 
&\leq & \lim_{\mu \to 0} \lim_{\dl \to 0}C\int_M \left((1-[\nu\cdot \nu_0]^2)(1+\mu)|\D \phi|^2 + \mu_1^2 \phi^2\right)\prod_{i=1}^K \eta_i^2\id \h^2  +\\
 &+& \lim_{\mu \to 0} \lim_{\dl \to 0}C(\mu) K \sup|\phi|^2  \sup_i \int_M |\D \eta_i|^2 \id\h^2 \\
 &=& C\left(\int_M (1-[\nu\cdot \nu_0]^2)|\D \phi|^2 \id \h^2 + \mu_1^2 \int_M \phi^2 \id \h^2 \right)
\end{eqnarray*}
where the last line follows if and only if 
$$ \int_M |\D \eta_i|^2 \id\h^2\to 0 $$ as $\dl \to 0$. 

We check this now: recall that the monotonicity formula gives us the existence of some $C$ such that for all sufficiently small $\rho$, 
$$\int_{M\cap B_\rho} \id \h^2\leq C(M)\rho^2.$$ 
Now set $\N\ni N \geq \fr{\log r_i^{-1/2}}{\log 2}$ so that $\sqrt{r_i}\leq 2^N r_i$ (we also assume $N\leq 2 \log r_i^{-1/2}$) and we have 
\begin{eqnarray*}
 \int_{M} |\D \eta_i|^2 \id \h^2 
 &\leq& \sum_{l=1}^N \int_{M\cap (B_{2^lr_i}\sm B_{2^{l-1}r_i})} \fr{1}{r^2 \log\left(\fr{1}{\sqrt{r_i}}\right)^2} \id\h^2 \\
 &\leq & \sum_{l=1}^N \fr{1}{2^{2l-2}r_i^2\log\left(\fr{1}{\sqrt{r_i}}\right)^2}\int_{M\cap B_{2^l r_i}} \id\h^2 \\
 &\leq & \sum_{l=1}^N \fr{C(M)}{\log\left(\fr{1}{\sqrt{r_i}}\right)^2} = \fr{C(M)N}{\log\left(\fr{1}{\sqrt{r_i}}\right)^2} \leq  C(M)[-\log \dl]^{-1}
\end{eqnarray*}
and the result follows. 
\end{proof}

Finally we state a theorem analogous to Theorem \ref{main} in the setting of changing ambient metrics on $N$ converging uniformly and smoothly to some limit. It can be proved using the methods in this paper, but using Theorem \ref{sschange} instead of Theorem \ref{ssreg}. Below we will fix a smooth back ground metric on $N$, call it $h$ and we say that metrics $g_k\to g$ converge smoothly to some limit if they converge smoothly with respect to $h$. Moreover, we measure volume with respect to $h$ (since this is equivalent to doing so via $g$ and thus $g_k$ also), and we let $index_k, index$ denote the index with respect to $g_k, g$ respectively.

\begin{theorem}\label{mainchange}
Let $2\leq n \leq 6$ and $N^{n+1}$ be a smooth closed manifold and $\{g_k\}_{k\in \mathbb{N}}$ a family of Riemannian metrics on $N$ converging smoothly to some limit $g$. If $\{M_k^n\}\In N$ is a sequence of closed, connected and embedded minimal hypersurfaces in $(N,g_k)$ with 
$$\h^n (M_k) \leq \Lambda <\infty \,\,\,\,\,
\text{and}\,\,\,\,\,\, index_k(M_k)\leq I$$
for some fixed constants $\Lambda\in \R$, $I\in \N$ independent of $k$. Then up to subsequence, there exists a closed connected and embedded minimal hypersurface $M\In (N,g)$ where $M_k \to M$ in the varifold sense with$$\h^n (M) \leq \Lambda <\infty \,\,\,\,\,
\text{and}\,\,\,\,\,\, index(M)\leq I.$$
We have that the convergence is smooth and graphical for all $x\in M\sm  \mathcal{Y}$ where $\mathcal{Y}=\{y_i\}_{i=1}^K\In M$ is a finite set with $K\leq I$ and the following dichotomy holds:\begin{itemize}
\item if the number of leaves in the convergence is one then $\mathcal{Y}=\emptyset$ i.e. the convergence is smooth and graphical everywhere

\item if the number of sheets is $\geq 2$ 
\begin{itemize}
\item if $N$ has $Ric_N >0$ then $M$ cannot be one-sided
\item if $M$ is two-sided then $M$ is stable. 
\end{itemize}
\end{itemize}

\end{theorem}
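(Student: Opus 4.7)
The plan is to run the proof of Theorem \ref{main} essentially verbatim, replacing the fixed ambient metric by the converging sequence $g_k\to g$ and invoking Theorem \ref{sschange} (together with the compactness remark following it) in place of Theorem \ref{ssreg}. First I would fix the background metric $h$ on $N$ and isometrically embed $(N,h)\emb\R^L$. Since $g_k\to g$ smoothly, the pullback functionals $F_k$ associated to each $g_k$ (in normal coordinates about any point, uniformly over $N$) satisfy the structural hypotheses \eqref{1}--\eqref{5} with $k$-independent constants $\mu,\mu_1$ and converge in $C^3$ to the functional $F$ of $g$. A uniform bound on $h$-volumes $\h^n_h(M_k)\leq \Lambda$ then gives, by Allard's compactness theorem applied in $(N,h)$, a subsequential varifold limit $M$; stationarity of $M_k$ with respect to $g_k$ together with the $C^3$ convergence of $F_k\to F$ implies $M$ is stationary for $g$, and Hausdorff convergence plus connectedness of the $M_k$ ensures $M$ is connected and $\h^n(M)\leq\Lambda$.

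Next I would establish the analogue of Lemma \ref{nosing} in this setting: if $M_k$ is stable in $B^N_R(x)$ (with respect to $g_k$) for all large $k$, then by Theorem \ref{sschange} applied to the $F_k$-stable hypersurfaces we obtain uniform second fundamental form bounds on $B^N_{R/2}(x)\cap M_k$, and by the accompanying compactness remark after Theorem \ref{sschange}, we get local $C^2$ (in fact $C^{k-1,\al}$) subsequential convergence to a smooth limit that must coincide with $M$. With this in hand, Claims 1, 2 and 3 from the proof of Theorem \ref{main} transfer directly: the instability argument relies only on the fact that $\dl^2_{M_k}(X)\to\dl^2_M(X)$ for fixed $X\in C^1_c(TN)$, which remains true when both the surfaces and the metrics converge (one just expands the two convergences together), combined with Lemma \ref{sball}. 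This produces a finite set $\mathcal{Y}\In M$ of at most $I$ points off which convergence is smooth and graphical, gives $\h^{n-2}(sing(M))=0$ (or $\h^0<\infty$ when $n=2$) via the local-stability argument on punctured neighbourhoods, allows us to invoke Theorem \ref{sschange} globally to conclude $M$ is smooth and embedded, and yields $index(M)\leq I$ with respect to $g$.

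The dichotomy in the multi-sheeted case is then handled by repeating Claims 5 and 6. Over any $\Om\cemb M\sm\mathcal{Y}$ the leaves are smooth graphs $u^1_k<\dots<u^L_k$ of functions solving the $g_k$-minimal hypersurface equation over $\Om$, and the path of graphs $v_k(x,t)=tu^L_k+(1-t)u^1_k$ together with an intermediate value argument applied to the $g_k$-first variation produces, after normalisation of $\ti h_k=u^L_k-u^1_k$ at a fixed base point, a positive smooth limit solution $h$ of the $g$-Jacobi equation on $M\sm\mathcal{Y}$, since the linearisations depend continuously on the metric and $g_k\to g$ smoothly. The White-type foliation argument of Claim 6 likewise goes through: the proposition in the appendix of \cite{W87} is stable under small $C^3$ perturbations of the ambient metric, so for $k$ large we still obtain a local foliation by $g_k$-minimal graphs with prescribed boundary values, and the maximum principle yields the same boundedness of $h$ near each $y_i\in\mathcal{Y}$. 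Thus $h$ extends to a strictly positive smooth solution of the $g$-Jacobi equation on $M$, forcing $M$ to be stable when two-sided. In the one-sided case with $Ric_g>0$, we lift to the universal cover (which is still Ricci positive since smooth convergence of the $g_k$ preserves $Ric>0$ in the limit) and apply Frankel to reduce to the two-sided case, contradicting stability.

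The single-sheeted part of the statement is the mild modification where one sets $v_k(x,t)=tu_k(x)$ globally on $M$ and renormalises in $L^2$. The main obstacle I anticipate is purely bookkeeping: one must verify that every constant appearing in Theorem \ref{sschange} and in the Schauder/Harnack estimates used along the way can be chosen uniformly in $k$, which follows because the $F_k$ satisfy \eqref{1}--\eqref{5} uniformly and because the coefficients of the linearised operators depend continuously on the metric and its first two derivatives. Beyond this, no new idea beyond the proof of Theorem \ref{main} is required.
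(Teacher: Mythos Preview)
Your proposal is correct and matches the paper's approach exactly: the paper gives no separate proof of Theorem \ref{mainchange}, stating only that it can be proved by the methods of Theorem \ref{main} with Theorem \ref{sschange} replacing Theorem \ref{ssreg}, which is precisely what you outline in detail. One small remark: for the single-sheeted conclusion $\mathcal{Y}=\emptyset$ the Allard argument of Claim 4 already suffices, and the Jacobi-field construction you allude to is not needed there, since Theorem \ref{mainchange} drops the additional stability sub-cases present in Theorem \ref{main}.
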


Clearly then we recover Corollary \ref{CS} in the case of changing background metrics: 
\begin{cor}\label{CS}
Let $N^{n+1}$ be a closed Riemannian manifold with $Ric_N >0$ and $2\leq n\leq 6$. Denote by $\mathfrak{M}_k^n(N)$ the class of closed, smooth, and embedded minimal hypersurfaces $M\In (N,g_k)$. Then given any $0<\Lambda <\infty$, $I\in \mathbb{N}$ and any sequence $M_k\in \mathfrak{M}^n_k$ with $\h^n(M_k)\leq \Lambda$ and $index_k(M_k)\leq I$ then there exists some $M\in \mathfrak{M}^n(N)$ such that (up to subsequence) $M_k\to M$ in the $C^k$ topology for all $k\geq 2$ with single-sheeted graphical convergence. Moreover $\h^n(M)\leq \Lambda$ and $index(M)\leq I$. 
\end{cor}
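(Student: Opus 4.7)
The plan is to treat this as a direct corollary of Theorem \ref{mainchange}, exactly parallel to how the earlier Corollary \ref{CS} was deduced from Theorem \ref{main}. First, I would feed the sequence $\{M_k\}$ into Theorem \ref{mainchange}: the uniform bounds $\h^n(M_k) \leq \Lambda$ and $index_k(M_k) \leq I$ yield, up to subsequence, a closed, connected, embedded minimal hypersurface $M \In (N,g)$ with $\h^n(M) \leq \Lambda$ and $index(M) \leq I$, such that $M_k \to M$ as varifolds, with the convergence smooth and graphical away from a finite set $\mathcal{Y}$ of cardinality at most $I$.

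The substantive step is to rule out the multi-sheeted case of the dichotomy. If the number of leaves in the convergence were $\geq 2$, Theorem \ref{mainchange} would give one of two alternatives: either $M$ is one-sided, which is explicitly forbidden under $Ric_g >0$ by the theorem itself, or $M$ is two-sided and stable. The latter is also impossible, since testing the second variation \eqref{secvar} against $f \equiv 1$ on a closed two-sided $M$ gives
\begin{equation*}
Q(1,1) \;=\; -\int_M \bigl(|A|^2 + Ric_g(\nu,\nu)\bigr) \id V_M \;<\; 0,
\end{equation*}
using $Ric_g > 0$ and the closedness of $M$. Hence neither branch of the $\geq 2$-leaf alternative can occur.

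Consequently the single-sheeted case of Theorem \ref{mainchange} applies, which forces $\mathcal{Y} = \emptyset$ and smooth graphical convergence $M_k \to M$ over all of $M$. This is precisely $C^k$-convergence for every $k\geq 2$, together with the inherited bounds $\h^n(M) \leq \Lambda$ and $index(M) \leq I$, so $M \in \mathfrak{M}^n(N)$ and the corollary is established.

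The proof has no real obstacle once Theorem \ref{mainchange} is in hand; it is essentially a two-line argument. The only point worth emphasising is that the hypothesis $Ric_g > 0$ is imposed only on the limit metric (no positivity of $Ric_{g_k}$ is needed), because the obstruction to the multi-sheeted alternative is purely a property of the two-sided, closed minimal hypersurface $M$ in $(N,g)$. This positivity is nevertheless indispensable, as the Hsiang examples \cite{H83} mentioned in the introduction show that it cannot be dropped.
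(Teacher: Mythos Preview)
Your proof is correct and follows exactly the paper's approach: this corollary is stated in the appendix without a separate proof, with the understanding that the same ``essentially trivial'' argument given for the earlier Corollary~\ref{CS} (apply the main theorem, then rule out the multi-sheeted branch because $Ric_N>0$ forbids both one-sided limits and two-sided stable limits) goes through verbatim with Theorem~\ref{mainchange} in place of Theorem~\ref{main}. One minor inaccuracy in your closing commentary: the Hsiang examples \cite{H83} live in the round $S^4$, which \emph{has} positive Ricci curvature, so they demonstrate the necessity of the \emph{index bound} (as in the Remark following Theorem~\ref{main}), not of the hypothesis $Ric_N>0$.
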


\end{document}